\colorlet{darkgreen}{green!50!black}
\definecolor{darkgreen}{RGB}{0,100,0}
\renewcommand{\geq}{\geqslant}
\renewcommand{\leq}{\leqslant}
\newtheorem*{definition}{Definition}
\newtheorem{theorem}{Theorem}
\newtheorem{assumption}{Assumption}
\newtheorem{lemma}[theorem]{Lemma}
\newtheorem{proposition}[theorem]{Proposition}
\newtheorem{remark}{Remark}
\DeclareMathOperator{\diag}{diag}
\author{S\MakeLowercase{andro} F\MakeLowercase{ranceschi and} K\MakeLowercase{ilian} R\MakeLowercase{aschel}}
        \address{Laboratoire SAMOVAR, T\'el\'ecom SudParis, Institut Polytechnique de Paris, 91120 Palaiseau, France} \email{sandro.franceschi@telecom-sudparis.eu}
\address{Laboratoire Angevin de Recherche en Math\'ematiques, CNRS, Universit\'e d'Angers, 49000 Angers, France
        } \email{raschel@math.cnrs.fr}
      \thanks{This project has received funding from the ANR RESYST (ANR-22-CE40) and from the European Research Council (ERC) under the European Union's Horizon 2020 research and innovation programme under the Grant Agreement No.\ 759702.}
\date{\today}
\begin{document}
\title[Dual skew symmetry in an orthant]{A\MakeLowercase{ dual skew symmetry for transient reflected} {B}\MakeLowercase{rownian motion in an orthant}}

%\textcolor{red}{faut il le "A" dans le titre ?}

\begin{abstract}
We introduce a transient reflected Brownian motion in a multidimensional orthant, which is either absorbed at the apex of the cone or escapes to infinity. We address the question of computing the absorption probability, as a function of the starting point of the process. We provide a necessary and sufficient condition for the absorption probability to admit an exponential product form, namely, that the determinant of the reflection matrix is zero. We call this condition a dual skew symmetry. It recalls the famous skew symmetry introduced by Harrison \cite{Ha-78}, which characterizes the exponential stationary distributions in the recurrent case. The duality comes from that the partial differential equation satisfied by the absorption probability is dual to the one associated with the stationary distribution in the recurrent case.
%We consider a transient semimartingale reflected Brownian motion (SRBM) in an orthant which can be absorbed at the origin. We assume that the drift is positive. The reflection matrix $R$ is \textit{not} assumed to be completely-$\mathcal{S}$ but all its principal sub-matrix are. We show that $\det R =0$ is a necessary and sufficient condition for the absorption probability to be exponential or equivalently of a product form. This new condition is called \textit{dual skew symmetry} condition. It recalls the famous skew symmetry introduced by Harrison \cite{Ha-78} which characterizes the exponential stationary distribution in the recurrent case. This condition is said to be \textit{dual} as the partial differential equation satisfied by the absorption probability is dual to the one satisfied by the stationary distribution.
\end{abstract}

\keywords{Reflected Brownian motion in an orthant; Absorption probability; Escape probability; Dual skew symmetry; PDE with Neumann conditions}

\maketitle

\section{Introduction and main results}

\subsection*{Reflected Brownian motion in orthants}

Reflected Brownian motion (RBM) in orthants $\mathbb R_+^d$ is a fundamental stochastic process. Starting from the eighties, it has been studied in depth, with focuses on its definition and semimartingale properties \cite{VaWi-85,Wi-85b,Wi-95}, its recurrence or transience \cite{Wi-85a,HoRo-93,Ch-96,BrDaHa-10,Br-11}, the possible particular (e.g., product) form of its stationary distribution \cite{HaWi-87,DiMo-09}, the asymptotics of its stationary distribution \cite{HaHa-09,DaMi-11}, its Lyapunov functions \cite{DuWi-94,Sa-17}, its links with other stochastic processes \cite{LG-87,Du-04,Le-17}, its use to approximate large queuing networks \cite{Ha-78,Fo-84,BaFa-87}, numerical methods to compute the stationary distribution \cite{DaHa-92}, links with complex analysis \cite{Fo-84,BaFa-87,FrRa-19,BMEPFrHaRa-21}, PDEs \cite{HaRe-81a}, etc. The RBM is characterized by a covariance matrix $\Sigma$, a drift vector $\mu$ and a reflection matrix $R$. We will provide in Section~\ref{sec:absorption} a precise definition. While $\Sigma$ and $\mu$ correspond to the Brownian behavior of the process in the interior of the cone, the matrix $R$ describes how the process is reflected on the boundary faces of the orthant. In the semimartingale case, RBM admits a simple description using local times on orthant faces, see~\eqref{eq:RBM_semimart}.

\subsection*{Dimensions $1$ and $2$} 
The techniques to study RBM in an orthant very heavily depend on the dimension. In dimension $1$, RBM with zero drift in the positive half-line $\mathbb R_+$ is equal, in distribution, to the absolute value of a standard Brownian motion, via the classical Tanaka formula; if the drift is non-zero, the RBM in $\mathbb R_+$ is connected to the so-called bang-bang process \cite{Sh-81}. Most of the computations can be performed explicitly, using closed-form expressions for the transition kernel.

The case of dimension $2$ is historically the one which attracted the most of attention, and is now well understood. Thanks to a simple linear transform, RBM in a quadrant with covariance matrix is equivalent to RBM in a wedge with covariance identity, see~\cite[Appendix]{FrRa-19}. The very first question is to characterize the parameters of the RBM (opening of the cone $\beta$ and reflection angles $\delta,\varepsilon$, see Figure~\ref{fig:dim-2})\ leading to a semimartingale RBM, as then tools from stochastic calculus become available. The condition takes the form $\alpha<1$, see \cite{Wi-85a}, with
\begin{equation}
\label{eq:def_alpha}
   \alpha=\frac{\delta+\varepsilon-\pi}{\beta}.
\end{equation}
As a second step, conditions for transience and recurrence were derived, see \cite{HoRo-93,Wi-85a}. Complex analysis techniques prove to be quite efficient in dimension $2$, see \cite{BaFa-87,BMEPFrHaRa-21}. In particular, this method leads to explicit expressions for the Laplace transforms of quantities of interest (stationary distribution in the recurrent case \cite{FrRa-19,BMEPFrHaRa-21}, Green functions in the transient case \cite{Fr-20}, escape and absorption probabilities \cite{ErFrHu-20,FoFrIv-22}).

\subsection*{Higher dimension}
As opposed to the previous cases, the case of $d>2$ is much most mysterious. However, necessary and sufficient conditions for the process to be a semimartingale are known, and read as follows: denote the reflection matrix by
\begin{equation}
\label{eq:def_reflection_matrix}
R=\begin{pmatrix}
    1 & r_{12} & \dots & r_{1d} \\
    r_{21} & 1 & \dots & r_{2d} \\
    \vdots & \vdots & \ddots & \vdots \\
    r_{d1} & r_{d2} & \dots & 1
  \end{pmatrix}.
\end{equation}
The column vector
\begin{equation}
\label{eq:def_reflection_vector}
   R_j=\begin{pmatrix}
    r_{1j} \\
  \vdots  \\
    r_{d j} 
  \end{pmatrix}
\end{equation} 
represents the reflection vector on the orthant face $x_i=0$. Then the RBM is a semimartingale if and only if the matrix $R$ is completely-$\mathcal S$, in the following sense, see~\cite{ReWi-88,TaWi-93}. 

By definition, a principal sub-matrix of $R$ is any matrix of the form $(r_{ij})_{(i,j)\in I^2}$, where $I$ is a non-empty subset of $\{1,\ldots,d\}$, possibly equal to $\{1,\ldots,d\}$. If $x$ is a vector in $\mathbb R^d$, we will write $x> 0$ (resp.\ $x\geqslant 0$) to mean that all its coordinates are positive (resp.\ non-negative). We define $x<0$ and $x\leqslant 0$ in the same way. The definition extends to matrices. 
\begin{definition}[$\mathcal S$-matrix]
A square matrix $R$ is an $\mathcal{S}$-matrix if there exists $x \geqslant 0$ such that
$Rx > 0$. Moreover, $R$ is completely-$\mathcal{S}$ if all its principal sub-matrices are $\mathcal{S}$-matrices.
\end{definition}

Apart from the semimartingale property, very few is known about multidimensional RBM. In particular, necessary and sufficient conditions for transience or recurrence are not yet fully known in the general case, even though, under some additional hypothesis on $R$, some conditions are known \cite{HaWi-87bis,Ch-96}, or in dimension 3 \cite{ElBeYa-00,BrDaHa-10}. For example, if $R$ is assumed to be a non-singular $\mathcal{M}$-matrix (which means that $R$ is an $\mathcal{S}$-matrix whose off-diagonal entries are all non-positive), then $R^{-1}\mu <0$ is a necessary and sufficient condition for positive recurrence. Moreover, contrary to the two-dimensional case, no explicit expressions are available for quantities of interest such as the stationary distribution, in general.

\begin{figure}
\includegraphics[scale=0.7]{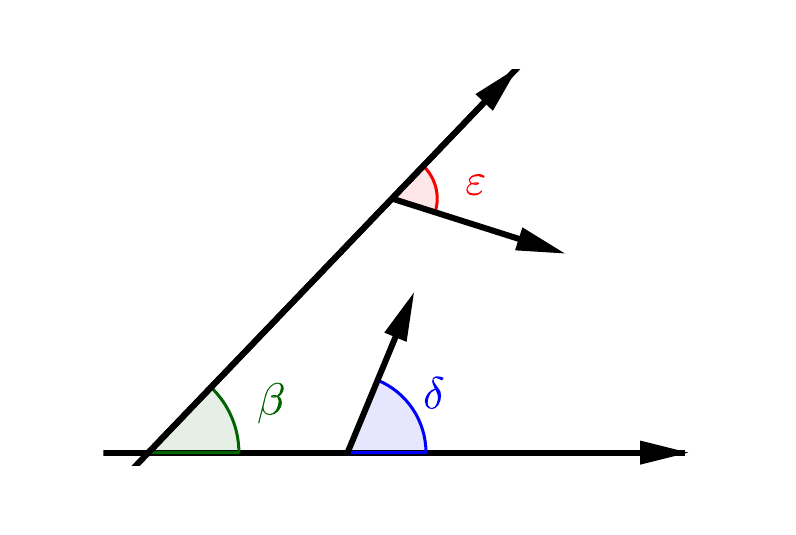} 
\caption{Wedge and angles of reflection.}
\label{fig:dim-2}
\end{figure}

\subsection*{The historical skew symmetry condition} 
The only notable and exceptional case, in which everything is known and behaves smoothly, is the so-called skew symmetric case, as discovered by Harrison \cite{Ha-78} in dimension $2$, and Harrison and Williams \cite{HaWi-87} in arbitrary dimension. They prove that the RBM stationary distribution has a remarkable product form
\begin{equation}
\label{eq:parameters_product_form}
   \pi(x_1,\ldots, x_d) = c_1\cdots c_d\exp(-c_1x_1-\cdots -c_dx_d)
\end{equation}
if and only if the following relation between the covariance and reflection matrices holds:
\begin{equation}
\label{eq:skew_symmetry}
   2\Sigma = R\cdot \diag \Sigma + \diag \Sigma \cdot R^\top.
\end{equation}
In the latter case, the stationary distribution admits the exponential product form given by \eqref{eq:parameters_product_form}, with parameters equal to
\begin{equation*}
   (c_1,\ldots,c_d)^\top = -2\cdot (\diag \Sigma)^{-1} \cdot R^{-1} \cdot \mu,
\end{equation*}
with $\mu$ denoting the drift vector. 
In dimension 2, if we translate this model from the quadrant to a wedge, condition \eqref{eq:skew_symmetry} is equivalent to $\alpha=0$, see \cite[Sec.~5.2]{FrRa-19} and our Figure~\ref{fig2:dim-2}.
Models having this skew symmetry are very popular, as they offer the possibility of computing the stationary distribution in closed-form. No generalization of the skew symmetry is known, except in dimension $2$, where according to \cite{DiMo-09}, the stationary distribution is a sum of $n\geq1$ exponential terms as in \eqref{eq:parameters_product_form} (with suitable normalization) if and only if $\alpha=-n$, where the parameter $\alpha$ is as in \eqref{eq:def_alpha}. The recent article \cite{BMEPFrHaRa-21} goes much further, generalizing again this result and finding new conditions on $\alpha$ to have simplified expressions of the density distribution.

The concept of skew symmetry has been explored in other cases than orthants, see for example \cite{Wi-87,OCOr-14}.

\begin{figure}
\includegraphics[scale=1.3]{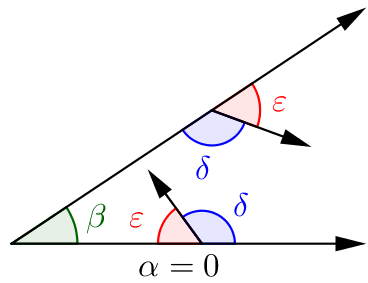} 
\hspace{1cm}
\includegraphics[scale=1.3]{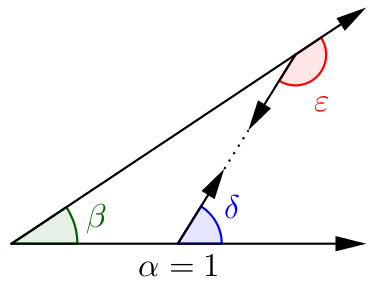} 
\caption{On the left, the standard skew symmetry condition in a wedge, corresponding to the condition $\alpha=0 $; on the right, the dual skew symmetry condition  $\alpha=1 $.}
\label{fig2:dim-2}
\end{figure}

\subsection*{Our approach and contributions}
In this paper, we will \underline{not} work under the completely-$\mathcal S$ hypothesis. More precisely, we will assume that:
\begin{assumption}
\label{as:quasi_comp-S-1}
The reflection matrix $R$ is not $\mathcal S$.  
\end{assumption}
\begin{assumption}
\label{as:quasi_comp-S-2}
All principal, strict sub-matrices of $R$ are completely-$\mathcal S$. 
\end{assumption}

Before going further, observe that the appearance of $\mathcal S$-matrices is very natural in the present context. Indeed, for instance, $R$ is an $\mathcal S$-matrix if and only if there exists a convex combination of reflection vectors which belongs to the interior of the orthant. Such a condition would allow us to define the process as a semimartingale after the time of hitting the origin. Similarly, the fact that a given principal sub-matrix of $R$ is $\mathcal S$ translates into the property that it is possible to define the process as a semimartingale after its visits on the corresponding face.

Therefore, as we shall prove, the probabilistic counterpart of Assumptions~\ref{as:quasi_comp-S-1} and \ref{as:quasi_comp-S-2} is that we can define the process $(Z_t)_{t\geq0}$ as a semimartingale before time
\begin{equation}
\label{eq:def_absorption_time}
   T:=\inf \{t>0: Z_t=0 \}\leq \infty,
\end{equation}
but not for $t\geq T$. For this reason, we will call $T$ in \eqref{eq:def_absorption_time} the absorption time:
 if the process hits the apex of the cone, then $T<\infty$ and we will say that the process is absorbed at the origin. Indeed, because of Assumption~\ref{as:quasi_comp-S-1}, there is no convex combination of reflection vectors belonging to the orthant, and consequently, we cannot define the process as a semimartingale after time $T$. However, our process is a semimartingale in the random time interval $[0,T]$; this will be proved in Proposition~\ref{prop:existence_ASRBM}.

We will also assume that:
\begin{assumption}
\label{as:drift>0}
The drift of the RBM is positive, that is, all coordinates of $\mu$ are positive.
\end{assumption}

Under Assumptions~\ref{as:quasi_comp-S-1}, \ref{as:quasi_comp-S-2} and \ref{as:drift>0}, our process exhibits the following dichotomy: either it hits the origin of the cone in finite time, i.e., $T<\infty$, or it goes to infinity (in the direction of the drift) before hitting the apex, i.e., $T=\infty$ and $\vert Z_t\vert \to\infty$ as $t\to\infty$. See Figure~\ref{fig:abs-esc}. We will prove this dichotomy in Proposition~\ref{prop:dichotomy}. 

This leads us to ask the following questions: what is the absorption probability 
\begin{equation}
\label{eq:absorption_probability}
   f(x)=\mathbb P_x[T<\infty]?
\end{equation} 
Equivalently, what is the escape probability
\begin{equation*}
   \mathbb P_x[T=\infty]=1-\mathbb P_x[T<\infty]=\mathbb{P} [\vert Z_t\vert\to \infty ]?
\end{equation*}
These questions are not only of theoretical nature: they also admit natural interpretations in population biology problems, in terms of extinction times of multitype populations \cite{LaRa-13}, or in risk theory, in terms of ruin of companies that collaborate to cover their mutual deficits \cite{AlAzMu-17,BaBoReWi-14,IvBo-15}.

\begin{figure}
\includegraphics[scale=0.7]{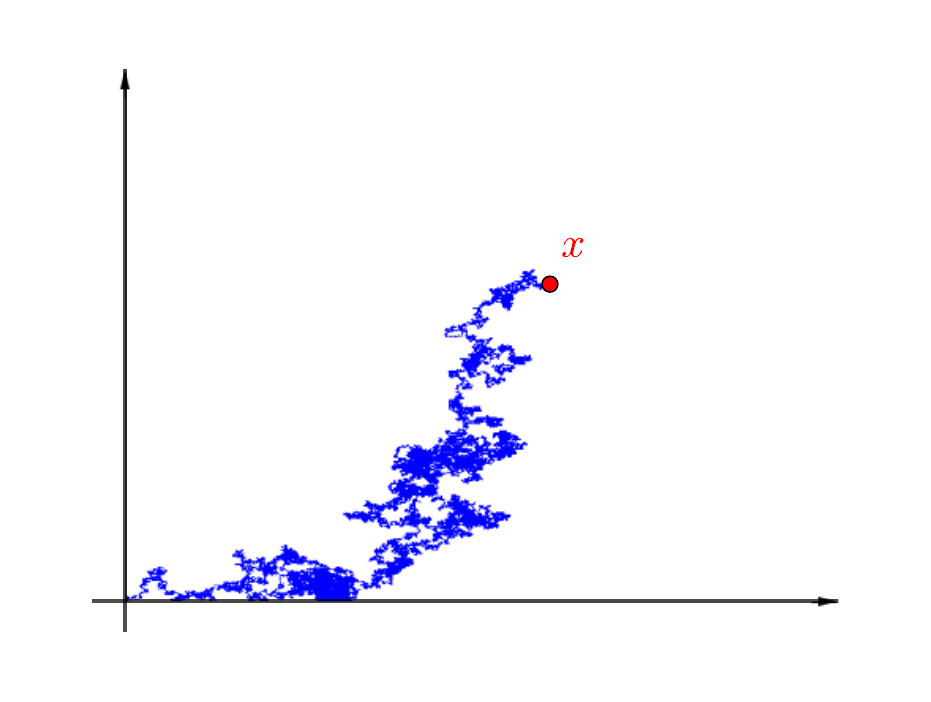} 
\hspace{1cm}
\includegraphics[scale=0.7]{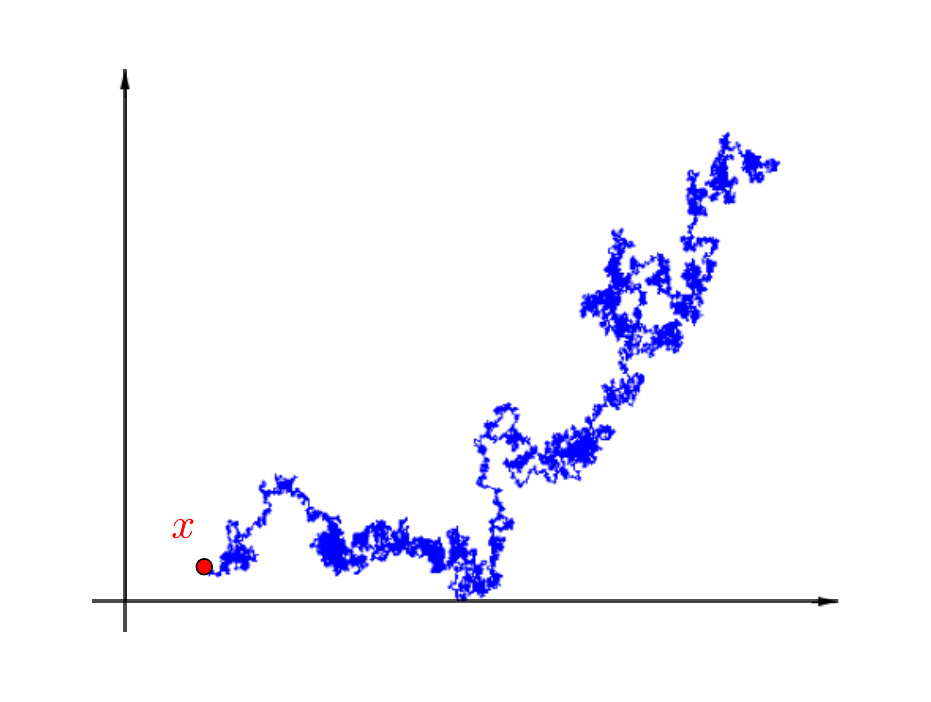} 
\caption{Two examples of paths of the process $(Z_t)_{t\geq0}$, with starting point $x$ marked in red. On the left, we have $T<\infty$, meaning that the process is absorbed in finite time at the apex of the cone. On the right, the process seems to escape to infinity, meaning that $T=\infty$.}
\label{fig:abs-esc}
\end{figure}

Because of its somehow dual nature, the problem of computing the absorption (or escape) probability is a priori as difficult as the problem of computing the stationary distribution in the semimartingale, recurrent case. Therefore, a natural question is to find an analogue of the skew symmetry \cite{Ha-78,HaWi-87} in this context, which we recalled here in \eqref{eq:parameters_product_form} and \eqref{eq:skew_symmetry}. The main result of the article is given in Theorem~\ref{thm:main} below. It is stated under four assumptions; while the first three have already been introduced, the final one, Assumption~\ref{as:neumann}, is of more technical nature and will be presented in Section~\ref{sec:PDE}. We conjecture that Assumption~\ref{as:neumann} is always true. For $x=(x_1, \ldots , x_n)\in \mathbb{R}_+^d$, $f(x)=\mathbb P_x[T<\infty]$ denotes the absorption probability~\eqref{eq:absorption_probability}.

\begin{theorem}[Dual skew symmetry in an orthant]
\label{thm:main}
Under Assumptions~\ref{as:quasi_comp-S-1}, \ref{as:quasi_comp-S-2}, \ref{as:drift>0} and \ref{as:neumann},
the following statements are equivalent:
\begin{enumerate}[label={\rm(\roman{*})},ref={\rm(\roman{*})}]
   \item\label{thm:main_it1}The absorption probability has a product form, i.e., there exist functions $f_1,\ldots ,f_d$ such that
\begin{equation*}
   f(x)=f_1(x_1)f_2(x_2) \cdots f_d(x_d).
\end{equation*}
\item\label{thm:main_it2}The absorption probability is exponential, i.e., there exists $a\in\mathbb{R}^d\setminus \{0\}$ such that
\begin{equation*}
   f(x)= \exp(a\cdot x).
\end{equation*}
\item\label{thm:main_it3}The reflection vectors $R_1,\ldots,R_d$ defined in \eqref{eq:def_reflection_matrix} and \eqref{eq:def_reflection_vector} are coplanar, that is,
\begin{equation*}
   \det R =0.
\end{equation*}
\end{enumerate}
When these properties are satisfied, the vector $a=(a_1,\ldots,a_n)$ in \ref{thm:main_it2} has negative coordinates and is the unique non-zero vector such that
\begin{equation}
\label{eq:a}
  a R =0\quad \text{and}\quad a  \Sigma \cdot a +a \mu=0.
\end{equation}
\end{theorem}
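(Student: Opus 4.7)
Throughout I would work with the boundary value problem solved by $f$, namely
\begin{equation*}
\mathcal L f := \tfrac{1}{2}\sum_{i,j}\Sigma_{ij}\partial_i\partial_j f+\mu\cdot\nabla f=0 \text{ in } \mathbb R_+^d, \qquad R_i\cdot\nabla f=0 \text{ on } \{x_i=0\}, \qquad f(0)=1,
\end{equation*}
together with $f(x)\to 0$ as $x$ escapes in the direction of $\mu$. This characterization should be derived earlier in the paper via It\^o and Dynkin, and Assumption~\ref{as:neumann} is precisely what supplies the uniqueness of this problem. Given this, each of (i), (ii), (iii) translates into an algebraic condition on $(\Sigma,\mu,R)$, and the entire proof reduces to playing with candidates and coefficients.

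\textbf{The implications.} I plan to organize the argument as $(\mathrm{ii})\Rightarrow(\mathrm{i})$, $(\mathrm{ii})\Leftrightarrow(\mathrm{iii})$, and $(\mathrm{i})\Rightarrow(\mathrm{ii})$. The first is tautological. For (ii)$\Rightarrow$(iii), plugging $f(x)=\exp(a\cdot x)$ into the Neumann condition on $\{x_i=0\}$ yields $(R_i\cdot a)\exp(a\cdot x)=0$, so $a\cdot R_i=0$ for every $i$, i.e.\ $aR=0$, and therefore $\det R=0$; the interior PDE then forces the quadratic relation of~\eqref{eq:a}. Conversely, for (iii)$\Rightarrow$(ii), Assumptions~\ref{as:quasi_comp-S-1}--\ref{as:quasi_comp-S-2} should imply that the left kernel of $R$ is one-dimensional; I pick a spanning vector $a$, rescale it so that $\tfrac{1}{2}a\Sigma a+a\cdot\mu=0$, and verify directly that $\exp(a\cdot x)$ solves every equation of the boundary value problem. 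The uniqueness provided by Assumption~\ref{as:neumann} then identifies this function with $f$. Finally, (i)$\Rightarrow$(ii) proceeds by separation of variables: write $f=\prod_i f_i(x_i)$, set $g_i=f_i'/f_i$, and divide the Neumann condition on $\{x_i=0\}$ by $f|_{x_i=0}>0$ to obtain
\begin{equation*}
g_i(0)+\sum_{j\neq i}r_{ji}\,g_j(x_j)=0.
\end{equation*}
Since the variables $(x_j)_{j\neq i}$ are independent, every $g_j$ with some nonzero $r_{ji}$ ($i\neq j$) must be constant. The only alternative, that row $j$ of $R$ equals $e_j^\top$, can be combined with the fact that the principal sub-matrix of $R$ obtained by deleting row and column $j$ is $\mathcal S$ (Assumption~\ref{as:quasi_comp-S-2}) to construct $x\geqslant 0$ with $Rx>0$, contradicting Assumption~\ref{as:quasi_comp-S-1}. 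Hence every $g_j$ is constant and each $f_j$ is exponential.

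\textbf{Main obstacles.} Two points require care. First, in (iii)$\Rightarrow$(ii), showing that the selected vector $a$ has every coordinate strictly negative: the weak bound $a_i\leqslant 0$ follows from $f\leqslant 1$ on $\mathbb R_+^d$, but the strict inequality needs $f(x)\to 0$ as $x_i\to\infty$, which relies on the dichotomy of Proposition~\ref{prop:dichotomy} together with Assumption~\ref{as:drift>0}. Second, the rank-one assertion on the left kernel of $R$ is the key algebraic ingredient: one must extract $\mathrm{rank}(R)=d-1$ from the combined effect of $R$ not being $\mathcal S$ and its strict principal sub-matrices being completely-$\mathcal S$. I expect this to be the delicate structural step, while the rest of the argument is a formal manipulation of the PDE and its Neumann boundary conditions.
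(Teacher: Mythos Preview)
Your skeleton coincides with the paper's: the same PDE characterisation, the same separation-of-variables argument for (i)$\Rightarrow$(ii) (including the observation that a row of $R$ equal to $e_j^\top$ would make $R$ an $\mathcal S$-matrix, which is exactly the paper's Lemma~\ref{lemme}), and the same plug-in verification for (ii)$\Leftrightarrow$(iii). There is, however, a genuine gap in the way you propose to handle your first ``obstacle''. In (iii)$\Rightarrow$(ii) you want to invoke the uniqueness half of Proposition~\ref{prop:PDE} to identify $\exp(a\cdot x)$ with $f$; but that uniqueness statement applies only to \emph{bounded} functions with the correct limit at infinity, and for $\exp(a\cdot x)$ this already requires $a<0$. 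Deducing $a_i\le 0$ from $f\le 1$ and $a_i<0$ from $f\to 0$ presupposes $f=\exp(a\cdot x)$, which is precisely what you are trying to prove --- the argument is circular. Knowing only that $aR=0$ and $\tfrac12 a\Sigma a+a\mu=0$ does not by itself force the coordinates of $a$ to have a sign.

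The paper closes this gap purely algebraically, in the same lemma that handles your second obstacle. Lemma~\ref{lemme2} shows not only that $\operatorname{rank}R=d-1$ but also that $\ker R^\top$ is spanned by a \emph{strictly positive} row vector $a'$. The proof builds, from the completely-$\mathcal S$ property of the proper principal sub-matrices, a positive matrix $T$ with $-RP=2\operatorname{Id}-T$ for some $P\ge 0$; Perron--Frobenius then forces the top eigenvalue of $T$ to be exactly $2$ (a larger eigenvalue would produce $x\ge 0$ with $Rx>0$, contradicting Assumption~\ref{as:quasi_comp-S-1}), and the associated left eigenvector is positive. Once $a'>0$ is in hand, $a=-\dfrac{a'\mu}{a'\Sigma a'}\,a'<0$ follows immediately from $\mu>0$ and the positive definiteness of $\Sigma$, and then $\exp(a\cdot x)$ legitimately satisfies all the hypotheses of Proposition~\ref{prop:PDE}. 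So your two obstacles are really one, and the missing idea is Perron--Frobenius.
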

We refer to Figures~\ref{fig2:dim-2} and \ref{fig:dim-3} for a geometric illustration of the condition $\det R =0$ appearing in \ref{thm:main_it3}. See Figure~\ref{fig:ellipsoide} for a geometric illustration of the exponential decay rate $a$ in \eqref{eq:a}. When the parameters satisfy the assumptions (and conclusions) of Theorem~\ref{thm:main}, we will say that the model satisfies the \textit{dual skew symmetry} condition. This terminology will be explained in more detail in Remark~\ref{rem:dual}. In the case of dimension $2$, Theorem~\ref{thm:main} is proved in \cite{ErFrHu-20}. Assumption~\ref{as:neumann} will be discussed in Remark~\ref{rem:as4}. Note that the proof of \ref{thm:main_it3}$\Rightarrow$\ref{thm:main_it2}$\Rightarrow$\ref{thm:main_it1} does not use Assumption~\ref{as:neumann}.

\begin{figure}
\includegraphics[scale=0.6]{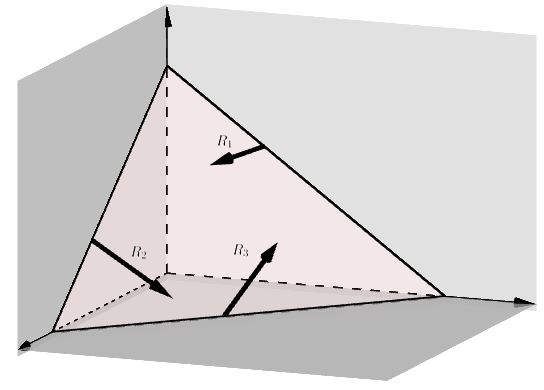} 
\caption{Condition $\det R =0$ in a $3$-dimensional orthant: the reflection vectors $R_1$, $R_2$ and $R_3$ are coplanar.}
\label{fig:dim-3}
\end{figure}

\begin{figure}
\includegraphics[clip=true,trim=0cm 1cm 0cm 0cm,scale=1.3]{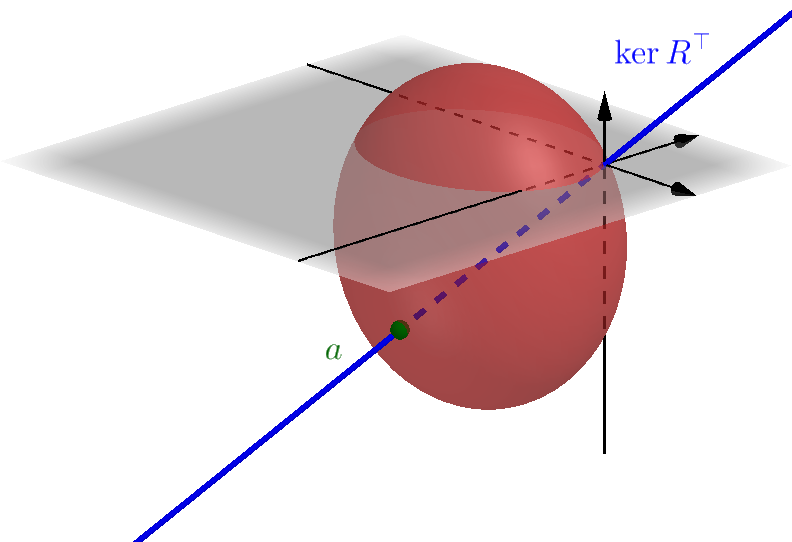} 
\caption{In red color: the ellipsoid with equation $x\Sigma \cdot x +\mu x =0$; in blue: $\ker R^\top$ (of dimension one by Lemma~\ref{lemme2}); in green: the exponential decay rate $a$.}
\label{fig:ellipsoide}
\end{figure}

\subsection*{Structure of the paper}
\begin{itemize}
   \item Section \ref{sec:absorption}: We define properly the process and show some of its pathwise properties. In particular, Proposition~\ref{prop:dichotomy} shows the dichotomy behavior (absorption vs.\ escape at infinity).
   \item Section \ref{sec:PDE}: We state and prove a PDE for the density of the absorption probability (Proposition~\ref{prop:PDE}). This PDE is dual to the one satisfied by the stationary distribution in the recurrent case.
   \item Section \ref{sec:skew}: We provide a proof of our main Theorem~\ref{thm:main}.
   \item Section \ref{sec:gen}: We propose a generalization of Theorem~\ref{thm:main} with absorption on facets, not necessarily the origin.
\end{itemize}

\section{Definition and first properties of the absorbed reflected Brownian motion}
\label{sec:absorption}

\subsection*{Existence and definition}

Let $(W_t)_{t\geq0}$ be a $d$-dimensional Brownian motion of covariance matrix $\Sigma$. Let $\mu \in\mathbb{R}^d$ be a drift, and let $R$ be a $d$-dimensional square matrix \eqref{eq:def_reflection_matrix} with coefficients $1$ on the diagonal.
\begin{proposition}[Existence of an absorbed SRBM]
\label{prop:existence_ASRBM}
Under Assumption \ref{as:quasi_comp-S-2}, there exists an absorbed SRBM in the orthant, i.e., a semimartingale defined up to the absorption time $T\leq\infty$ as in \eqref{eq:def_absorption_time} and such that for all $t \leqslant T$,
\begin{equation}
\label{eq:RBM_semimart}
   Z_t = x +W_t+\mu t + R L_t,
\end{equation}
where $L_t$ is a vector whose $i$th coordinate $L_t^i$ is a continuous, non-decreasing process starting from $0$, which increases only when the $i$th coordinate of the process $Z_t^i = 0$, and which is called the local time on the corresponding orthant face.
\end{proposition}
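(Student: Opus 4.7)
The plan is to solve the associated Skorokhod problem pathwise up to the first hitting time of the origin, and then specialize to Brownian input to obtain the absorbed SRBM. Recall that the Skorokhod problem (SP) associated with the data $(R, w)$, where $w : [0,\infty) \to \mathbb{R}^d$ is a continuous path with $w(0) \in \mathbb{R}_+^d$, seeks a pair $(z, \ell)$ of continuous paths satisfying $z = w + R \ell \in \mathbb{R}_+^d$, $\ell$ non-decreasing with $\ell(0) = 0$, and each component $\ell^i$ increasing only on $\{t : z^i(t) = 0\}$. The Taylor--Williams theorem \cite{TaWi-93} provides existence, uniqueness, and Lipschitz continuity of the SP solution when $R$ is completely-$\mathcal{S}$.

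First, I would show that, under Assumption~\ref{as:quasi_comp-S-2} alone, the SP admits a unique solution on $[0, T_0(w))$, where $T_0(w) := \inf\{t \geq 0 : z(t) = 0\}$, by a stopping-time induction. When $z(0) = x > 0$, the process is simply $z(t) = x + w(t) - w(0)$ until the first hitting time of a face. Thereafter, whenever the process lies on the union of faces indexed by some $I \subsetneq \{1,\ldots,d\}$ and remains bounded away from the origin, only the principal sub-matrix $R^{(I)} := (r_{ij})_{i,j \in I}$ is active in the reflection term. By Assumption~\ref{as:quasi_comp-S-2}, this sub-matrix is completely-$\mathcal{S}$, so Taylor--Williams applies to the restricted SP in the sub-orthant $\mathbb{R}_+^{|I|}$ and defines the local Skorokhod map.

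To patch the local solutions together, I would introduce the truncation times $S_\varepsilon(w) := \inf\{t \geq 0 : |z(t)| \leq \varepsilon\}$ for $\varepsilon > 0$. On $[0, S_\varepsilon]$, the Lipschitz continuity of the sub-orthant Skorokhod maps prevents accumulation of face-change times, so the concatenation of local SP solutions yields a well-defined continuous pair $(z, \ell)$. Letting $\varepsilon \downarrow 0$ produces the global solution on $[0, T_0(w))$. Specializing the input to the random path $w(t) = x + W_t + \mu t$ and setting $T := T_0(w)$ yields the absorbed SRBM $(Z_t, L_t)_{t \leq T}$ satisfying \eqref{eq:RBM_semimart}; the semimartingale property on the stochastic interval $[0, T]$ follows because $Z$ decomposes as the sum of the input Brownian semimartingale and a process $RL$ of finite variation.

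The main obstacle will be the patching step: one must rigorously verify that only finitely many face transitions accumulate on each $[0, S_\varepsilon]$, and that the resulting local time process $L$ remains continuous and non-decreasing across changes of the active sub-orthant. This requires careful bookkeeping of which principal sub-matrix governs each phase, with the Lipschitz continuity of the sub-orthant Skorokhod maps providing the uniform control needed to concatenate consistently and to pass to the limit $\varepsilon \downarrow 0$.
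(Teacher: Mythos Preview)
Your strategy is essentially the one the paper relies on, but the paper does not carry it out: it simply observes that Chapter~4 of Taylor's thesis \cite{Ta-90} already constructs the SRBM on $[0,T]$ using only Assumption~\ref{as:quasi_comp-S-2}, and that the $\mathcal S$-property of $R$ itself is invoked only later (Chapter~5) to restart the process from the origin. So the paper's ``proof'' is a pointer to the literature, whereas you are sketching the actual construction that Taylor performs.

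Your sketch is sound in outline but has two imprecisions worth flagging. First, when the process is near the faces indexed by $I\subsetneq\{1,\ldots,d\}$, the relevant Skorokhod problem is not literally posed in the sub-orthant $\mathbb R_+^{|I|}$: the reflection vectors $R_i$, $i\in I$, are still $d$-dimensional and push all coordinates. The correct statement is that in a neighborhood avoiding the origin, only the constraints $z^i\geq 0$ for $i\in I$ can bind, and the well-posedness of that constrained problem is governed by the principal sub-matrix $R^{(I)}$ being completely-$\mathcal S$ --- which is exactly Assumption~\ref{as:quasi_comp-S-2}. Second, Lipschitz continuity of the Skorokhod map does not prevent accumulation of face-change times (such accumulation can and does occur even for completely-$\mathcal S$ matrices); what it gives you is uniform control of $(z,\ell)$ in terms of the input $w$, which is what allows the local solutions on $[0,S_\varepsilon]$ to be patched consistently and the limit $\varepsilon\downarrow 0$ to be taken. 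You correctly identify this patching as the delicate step; Taylor handles it via oscillation estimates rather than by counting transitions.
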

Under the additional hypothesis that $R$ is completely-$\mathcal S$, Proposition~\ref{prop:existence_ASRBM} is most classical: in this case, the RBM is well defined as the semimartingale \eqref{eq:RBM_semimart}, actually for any $t\in[0,\infty)$. Our contribution here is to prove that if $R$ is not an $\mathcal S$-matrix (our Assumption~\ref{as:quasi_comp-S-1}) and is therefore not completely-$\mathcal S$, then it is still possible to define the RBM as a semimartingale on the time interval $[0,T]$.

\begin{proof}
Although Proposition~\ref{prop:existence_ASRBM} is not formally proved in Taylor's PhD thesis \cite{Ta-90}, all necessary tools may be found there. More precisely, Taylor proves that when $R$ is completely-$\mathcal S$ (i.e., our Assumption~\ref{as:quasi_comp-S-2} plus the fact that $R$ is $\mathcal S$), then the RBM is an orthant $\mathbb R_+^d$ exists as a semimartingale globally on $[0,\infty)$. The proof in \cite{Ta-90} is then split into two parts:
\begin{itemize}
   \item First, \cite[Chap.~4]{Ta-90} shows that the SRBM exists on $[0,T]$, with $T$ defined in \eqref{eq:def_absorption_time}. The fact that $R$ is an $\mathcal S$-matrix is nowhere used in the proof: the only needed hypotheses are that all principal, strict sub-matrices are completely-$\mathcal S$ (our Assumption~\ref{as:quasi_comp-S-2}).
   \item As a second step, in \cite[Chap.~5]{Ta-90} (see in particular her Lemma~5.3), Taylor proves that if $R$ is an $\mathcal S$-matrix, then it is possible for the process started at the origin to escape the origin and to be well defined as a semimartingale.
\end{itemize}
Using only the first part of her arguments readily entails our Proposition~\ref{prop:existence_ASRBM}.
\end{proof}

\subsection*{Absorption and escape in asymptotic regimes}

We first prove two results which are intuitively clear, namely, that the absorption probability tends to one (resp.\ zero)\ when the starting point approaches the origin (resp.\ infinity), see Proposition~\ref{prop:absorption_origin} (resp.\ Proposition~\ref{prop:absorption_infinity}). Then, we will prove in Proposition~\ref{prop:dichotomy} the dichotomy already mentioned: either the process is absorbed in finite time, or it escapes to infinity as time goes to infinity. By convention, we will write $x\to 0$ (resp.\ $x\to\infty$) to mean that $\vert x\vert\to0$ (resp.\ $\vert x\vert\to\infty$) in the cone.

\begin{proposition}[Absorption starting near the origin]
\label{prop:absorption_origin}
One has
\begin{equation*}
   \lim_{x\to 0}\mathbb{P}_x [T<\infty] = 1.
\end{equation*}
\end{proposition}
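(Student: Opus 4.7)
My plan is to reduce the claim to an almost-sure absorption property for a driftless SRBM via Brownian scaling, and then to prove that driftless absorption by means of a Lyapunov function built from Assumptions~\ref{as:quasi_comp-S-1} and~\ref{as:quasi_comp-S-2}. For $x\neq 0$, set $c := \vert x\vert$ and $\hat Z^x_t := c^{-1} Z^x_{c^2 t}$. Brownian scaling shows that $\hat Z^x$ is an SRBM with covariance $\Sigma$, reflection matrix $R$ and drift $c\mu$, started at the unit vector $\hat x := x/c$, and with absorption time $\hat T^x = T/c^2$; in particular $\mathbb{P}_x[T < \infty] = \mathbb{P}[\hat T^x < \infty]$. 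As $x\to 0$ the drift $c\mu$ vanishes, so along any subsequence where $\hat x \to u$ on $S^{d-1}\cap \mathbb{R}^d_+$, the processes $\hat Z^x$ converge in distribution on compact time intervals to the driftless SRBM $\tilde Z$ started at $u$, thanks to Proposition~\ref{prop:existence_ASRBM} and continuity of the Skorokhod reflection map in its parameters up to the absorption time.

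The key step is to show $\mathbb{P}[\tilde T < \infty] = 1$ using a strictly positive Lyapunov vector. By Gordan's alternative, Assumption~\ref{as:quasi_comp-S-1} produces some non-zero $y\geq 0$ with $R^\top y\leq 0$, and under Assumption~\ref{as:quasi_comp-S-2} this $y$ must in fact satisfy $y>0$: if its support $I = \{j : y_j>0\}$ were a proper subset of $\{1,\ldots,d\}$, then $y_I \geq 0$ with $R_I^\top y_I \leq 0$ would contradict, again via Gordan, the fact that the principal sub-matrix $R_I$ is $\mathcal S$. With such $y>0$ in hand, the process
\begin{equation*}
V_t := y\cdot \tilde Z_{t\wedge \tilde T} = y\cdot u + y\cdot \tilde W_{t\wedge \tilde T} + (R^\top y)\cdot \tilde L_{t\wedge \tilde T}
\end{equation*}
is a non-negative supermartingale (continuous martingale plus non-increasing reflection term, kept non-negative because $y>0$ and $\tilde Z\geq 0$) and hence converges almost surely to a finite limit by Doob. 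On $\{\tilde T=\infty\}$ this convergence would force $y\cdot \tilde W_t + (R^\top y)\cdot \tilde L_t$ to converge, but this is inconsistent with the fact that $y\cdot \tilde W_t$ is a one-dimensional Brownian motion of positive variance $y^\top \Sigma y$ (oscillating between $\pm \infty$) while $(R^\top y)\cdot \tilde L_t$ is monotone: if the reflection term stays bounded, the Brownian motion would need to converge, absurd; and if it tends to $-\infty$, the Brownian motion would need to tend to $+\infty$, equally absurd. Hence $\tilde T<\infty$ almost surely.

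To conclude, given $\varepsilon>0$ I choose $N$ such that $\mathbb{P}[\tilde T\leq N] \geq 1 - \varepsilon$; the weak convergence of the first paragraph transfers this to $\mathbb{P}[\hat T^x \leq N] \geq 1 - 2\varepsilon$ for $\vert x\vert$ small enough along the chosen subsequence, yielding $\mathbb{P}_x[T<\infty] \geq 1 - 2\varepsilon$. Since the argument applies along any subsequence, the full limit follows. The main obstacle I anticipate is precisely this last continuity step: convergence of first-hitting times under weak convergence of paths is always delicate, and here it will come down to showing that the limit process $\tilde Z$ hits $0$ regularly enough for small path perturbations to yield small hitting-time perturbations. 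This should follow from the non-degeneracy of $\Sigma$ together with the explicit semimartingale decomposition provided by Proposition~\ref{prop:existence_ASRBM}.
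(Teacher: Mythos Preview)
Your Lyapunov argument for the driftless case is correct and elegant: the existence of a strictly positive $y$ with $R^\top y\leq 0$ follows exactly as you say from Gordan's theorem and Assumption~\ref{as:quasi_comp-S-2}, and the supermartingale $y\cdot\tilde Z$ then forces $\tilde T<\infty$ almost surely. The problem is the transfer step you flag yourself. Two non-trivial ingredients are asserted but not established: (a) that the law of the SRBM depends continuously on the drift parameter up to absorption --- for oblique reflection in orthants the Skorokhod map is not known to be continuous in general, which is precisely why the Taylor--Williams construction proceeds via weak existence and uniqueness rather than via a pathwise map; and (b) that $\{\tilde T\leq N\}$ is a continuity set for the weak limit. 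Since the absorption point is the vertex of the cone, the driftless process may reach $0$ along paths that cling to low-dimensional faces, and it is not at all clear that a small perturbation of the driving noise or drift cannot delay absorption indefinitely. The non-degeneracy of $\Sigma$ and the semimartingale decomposition alone do not settle this; you would essentially need a quantitative ``overshoot'' at the absorption time, which is a separate analysis.

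The paper avoids all of this by a much more direct coupling with the \emph{unreflected} process. Define $\tau_x=\inf\{t>0:x+W_t+\mu t<0\text{ componentwise}\}$. If $\tau_x<\infty$ but $T>\tau_x$, then at time $t=\tau_x$ one has $RL_t=Z_t-(x+W_t+\mu t)>0$ with $L_t\geq 0$, which would make $R$ an $\mathcal S$-matrix, contradicting Assumption~\ref{as:quasi_comp-S-1}. Thus $\{\tau_x<\infty\}\subset\{T<\infty\}$, and since Blumenthal's zero--one law gives $\mathbb P[\tau_0=0]=1$, dominated convergence yields $\mathbb P[\tau_x<\infty]\to 1$ as $x\to 0$. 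This argument uses the same algebraic fact you exploit (non-$\mathcal S$ness of $R$), but applies it pathwise and pointwise in $t$ rather than through a Lyapunov supermartingale, and thereby sidesteps scaling, weak convergence, and any regularity of hitting times.
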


\begin{proposition}[Absorption starting near infinity]
\label{prop:absorption_infinity}
One has
\begin{equation*}
   \lim_{x \to \infty}\mathbb{P}_x [T<\infty] = 0.
\end{equation*}
\end{proposition}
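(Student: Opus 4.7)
The strategy is to compare the reflected process to the free Brownian motion with drift and to exploit $\mu > 0$ (Assumption~\ref{as:drift>0}) to force escape to infinity. Introduce the first hitting time of the boundary
\begin{equation*}
\tau := \inf\{t > 0 : Z_t \in \partial \mathbb{R}_+^d\}.
\end{equation*}
Up to $\tau$ no local time has accumulated, so $Z_t = \tilde Z_t := x + W_t + \mu t$ for $t \leq \tau$. On $\{\tau = \infty\}$, the law of large numbers gives $\tilde Z_t/t \to \mu$ a.s., hence $\lvert Z_t\rvert \to \infty$ and $T = \infty$. Therefore $\{T < \infty\} \subset \{\tau < \infty\}$, and by the classical one-dimensional hitting-probability formula for a drifted Brownian motion,
\begin{equation*}
\mathbb{P}_x[T < \infty] \leq \mathbb{P}_x[\tau < \infty] \leq \sum_{i=1}^d \mathbb{P}\Bigl[\inf_{t \geq 0}(W_t^i + \mu_i t) \leq -x_i\Bigr] = \sum_{i=1}^d \exp\Bigl(-\frac{2 \mu_i x_i}{\Sigma_{ii}}\Bigr).
\end{equation*}
This already settles the result whenever $\min_i x_i \to \infty$ (e.g.\ along a direction with all positive components).

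For a general sequence $x_n \to \infty$ possibly approaching a boundary face, I would apply the strong Markov property at a deterministic time $t_n := \log \lvert x_n\rvert$,
\begin{equation*}
\mathbb{P}_{x_n}[T < \infty] \leq \mathbb{P}_{x_n}[T \leq t_n] + \mathbb{E}_{x_n}\bigl[\mathbb{P}_{Z_{t_n}}[T < \infty]\bigr],
\end{equation*}
and use Lipschitz estimates for the Skorokhod map (see Taylor's thesis \cite{Ta-90}) to show that $\sup_{t \leq t_n}\lvert L_t\rvert$ is at most of the order of $\sup_{t \leq t_n}\lvert W_t + \mu t\rvert = O(t_n)$ with high probability. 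Combined with $Z_{t_n} = x_n + W_{t_n} + \mu t_n + R L_{t_n}$ and $\mu > 0$, each coordinate satisfies $Z_{t_n}^i \geq \min(x_n^i/2,\mu_i t_n/2)$ on an event of probability tending to $1$. Applying the first step to $Z_{t_n}$ on this event bounds the second term by $\sum_i \exp(-c \min(x_n^i,\mu_i t_n))$, which vanishes since either $x_n^i \to \infty$ for some $i$ or at least $t_n = \log \lvert x_n\rvert \to \infty$. The first term vanishes by standard path estimates: over the interval $[0, t_n]$ the displacement of $Z$ is $O(t_n) = o(\lvert x_n\rvert)$, so the process cannot reach the origin in time $t_n$.

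The main obstacle is the control of the local time when the starting point lies close to the boundary: under Assumption~\ref{as:quasi_comp-S-1} the oblique reflection can depress coordinates that were initially large, and one really needs the uniform Lipschitz estimate for the Skorokhod problem (available under Assumption~\ref{as:quasi_comp-S-2}) to ensure that the large coordinates of $x_n$ are not destroyed during the first $t_n$ units of time. Once this control is in place, the two-step strong-Markov scheme above yields $\mathbb{P}_x[T<\infty] \to 0$ as $\lvert x \rvert \to \infty$.
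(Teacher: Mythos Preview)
Your first step coincides with the paper's own proof: both compare $Z$ with the free drifted Brownian motion $x+W_t+\mu t$, note that on the event $\{\forall t,\ x+W_t+\mu t>0\}$ no reflection occurs so $T=\infty$, and then use $\mu>0$ to show that this event has probability tending to $1$. Your formulation via $\{T<\infty\}\subset\{\tau<\infty\}$ and the explicit one-dimensional bound $\sum_i\exp(-2\mu_i x_i/\Sigma_{ii})$ is a slightly more quantitative version of exactly the same idea. In particular, the paper's proof, like your first step, only establishes the limit along sequences with $\min_i x_i\to\infty$; the paper does not treat separately the case where $\lvert x\rvert\to\infty$ while some coordinates stay bounded.

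You correctly identify this gap and attempt to close it, which is more than the paper does. However, the second step of your scheme does not go through as written. The crucial claim is that, on a high-probability event, $Z_{t_n}^i \geq \min(x_n^i/2,\mu_i t_n/2)$ for every $i$. The Lipschitz bound for the Skorokhod map only gives $\lvert L_{t_n}\rvert \leq C\sup_{s\leq t_n}\lvert W_s+\mu s\rvert = O(t_n)$, hence $(RL_{t_n})^i \geq -C' t_n$ for some constant $C'$ depending on $R$. Plugging this into $Z_{t_n}^i = x_n^i + W_{t_n}^i + \mu_i t_n + (RL_{t_n})^i$ yields only $Z_{t_n}^i \geq x_n^i + (\mu_i - C')t_n + o(t_n)$, and when $C'>\mu_i$ (which you cannot exclude) there is no lower bound of order $t_n$. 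In other words, the very phenomenon you flag---oblique reflection depressing large coordinates---is not neutralised by the Lipschitz estimate alone; a sharper control of the local times (e.g.\ exploiting that each $L^j$ stays bounded in expectation once the drift pushes $Z^j$ away from its face) would be needed. There is also a technical point: the global Lipschitz property of the Skorokhod map is normally proved under the assumption that $R$ is completely-$\mathcal S$, which fails here by Assumption~\ref{as:quasi_comp-S-1}; you would need to argue that the estimate still holds on $[0,T)$ under Assumption~\ref{as:quasi_comp-S-2}.
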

 
\begin{proposition}[Complementarity of escape and absorption]
\label{prop:dichotomy}
When $T=\infty$, then almost surely $\lim_{t\to\infty} \vert Z_t\vert = \infty$, i.e.,
\begin{equation*}
  \mathbb{P}_x \left[ \left. \lim_{t\to\infty} \vert Z_t\vert = \infty \right\vert T=\infty \right]   = 1.
\end{equation*}
This implies that $\mathbb{P}_x[T=\infty]=\mathbb{P}_x[\vert Z_{t\wedge T}\vert \to \infty ]$.
\end{proposition}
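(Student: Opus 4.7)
The plan is to exploit the escape probability $g(x):=\mathbb{P}_x[T=\infty]$, which turns $Z$ into a Doob martingale whose limit pins down the event $\{T=\infty\}$, and then to combine this with a uniform lower bound on the absorption probability over bounded regions of the orthant.

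First, by the strong Markov property of the semimartingale $Z$ on $[0,T]$ (Proposition~\ref{prop:existence_ASRBM}) and the fact that $g(0)=0$, one checks the identity
$$
M_t \;:=\; g\bigl(Z_{t\wedge T}\bigr) \;=\; \mathbb{P}_x\!\left[T=\infty \mid \mathcal{F}_t\right],
$$
so $(M_t)_{t\geq 0}$ is a bounded $[0,1]$-valued Doob martingale. Since $T$ is $\mathcal{F}_\infty$-measurable, L\'evy's $0$–$1$ law yields $M_t\to \mathbf{1}_{\{T=\infty\}}$ almost surely; in particular, on $\{T=\infty\}$ we have $g(Z_t)\to 1$ a.s. The key remaining claim is that for every $M>0$,
$$
\inf_{x\in\mathbb{R}_+^d,\,|x|\leq M}\mathbb{P}_x[T<\infty] \;>\; 0. \qquad(\ast)
$$
Granting $(\ast)$, on $\{T=\infty\}$ the convergence $g(Z_t)\to 1$ forces $|Z_t|>M$ eventually, for every fixed $M$, whence $|Z_t|\to\infty$ a.s. The equality $\mathbb{P}_x[T=\infty]=\mathbb{P}_x[|Z_{t\wedge T}|\to\infty]$ then follows at once, since $Z_{t\wedge T}\to 0$ on $\{T<\infty\}$.

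To prove $(\ast)$, I would fix (via Proposition~\ref{prop:absorption_origin}) some $\epsilon>0$ for which $\mathbb{P}_y[T<\infty]\geq 1/2$ whenever $|y|\leq \epsilon$, and use the strong Markov property at the hitting time $\tau_\epsilon:=\inf\{t\geq 0:|Z_t|\leq \epsilon\}$ to reduce $(\ast)$ to the inequality $\inf_{|x|\leq M}\mathbb{P}_x[\tau_\epsilon<\infty]>0$. Since $\Sigma$ is positive definite, $Z$ is a non-degenerate diffusion in the interior of $\mathbb{R}_+^d$, so the Stroock–Varadhan support theorem gives $\mathbb{P}_x[\tau_\epsilon\leq T_0]>0$ for any fixed deterministic $T_0>0$ and any $x$; Feller continuity of $x\mapsto\mathbb{P}_x[\tau_\epsilon\leq T_0]$ together with compactness of $\{|x|\leq M\}\cap\mathbb{R}_+^d$ then yields the desired uniform positivity. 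The main obstacle is precisely this last uniform estimate: one has to argue that the support-theorem and continuity inputs remain valid up to the reflecting boundary, where under Assumptions~\ref{as:quasi_comp-S-1}–\ref{as:quasi_comp-S-2} the process is only defined as a semimartingale on $[0,T]$ and the oblique reflection vectors $R_1,\dots,R_d$ have no convex combination entering the orthant.
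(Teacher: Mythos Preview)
Your approach via L\'evy's $0$--$1$ law for the martingale $g(Z_{t\wedge T})$ is genuinely different from the paper's. The paper conditions on $\{T=\infty\}$ to obtain a Markov process $\widetilde Z$ on $\mathbb R_+^d\setminus\{0\}$, argues that all points communicate, exhibits one transient point (using that with positive probability $x+W_t+\mu t>0$ for all $t$ when $\mu>0$, so that $\widetilde Z_t=x+W_t+\mu t\to\infty$ on that event), and then invokes the general result from \cite{Az-66} that a process with only transient points tends to infinity almost surely. Your route avoids both the $h$-transform and the recurrence/transience machinery; conversely, the paper's argument never needs your uniform estimate~$(\ast)$.

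The gap you yourself flag in $(\ast)$ is real if you insist on the support-theorem/Feller-continuity route, since those inputs are indeed delicate at the reflecting boundary under Assumptions~\ref{as:quasi_comp-S-1}--\ref{as:quasi_comp-S-2}, and you do not close them. But $(\ast)$ has an elementary proof already implicit in the paper. By \ref{it:step1} of the proof of Proposition~\ref{prop:absorption_origin}, one has $\{\tau_x<\infty\}\subset\{T<\infty\}$ with $\tau_x=\inf\{t>0:x+W_t+\mu t<0\}$. For $x\in\mathbb R_+^d$ with $\max_i x_i\leq M$, the event that every coordinate of $W_1+\mu$ is less than $-M$ forces $x+W_1+\mu<0$ and hence $\tau_x\leq 1$; since $W_1$ is a non-degenerate Gaussian, this event has positive probability independent of $x$, yielding
\[
\inf_{|x|\leq M}\mathbb P_x[T<\infty]\ \geq\ \mathbb P\bigl[W_1+\mu<-(M,\ldots,M)^\top\bigr]\ >\ 0.
\]
With this replacement for your support-theorem step, the argument is complete.
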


%For $z\in \mathbb{R}^d$ we denote $z>0$ when $z\in (\mathbb{R}_+^*)^d$ and $z<0$ when $z\in (\mathbb{R}_-^*)^d$.
\begin{proof}[Proof of Proposition~\ref{prop:absorption_origin}]
Let us define $\tau_x = \inf \{t>0 : x+W_t+\mu t <0  \}$ (by convention $\inf \emptyset =\infty$), and consider the set
\begin{equation*}
   \{ \tau_x <\infty \} = \{ \exists t>0 \text{ such that } x+W_t+\mu t <0 \}.
\end{equation*} 
The proof consists in two steps. We first prove that $\{ \tau_x <\infty \} \subset \{ T<\infty \}$ and then show that $\lim_{x\to 0}\mathbb{P}[ \tau_x <\infty ] =1$.
\begin{enumerate}[label={\rm Step~\arabic{*}.},ref={\rm Step~\arabic{*}}]
   \item\label{it:step1}
%We proceed by contradiction and we assume that $A_x \cap   \{ T=\infty \} $ is not empty. Assuming that we are in this intersection, we choose $t$ such that
Assume that $\tau_x < \infty$ and fix a $t<\infty$ such that
$x+W_t+\mu t<0$. We are going to show that $T\leqslant t$. 
%Recall that $R (\text{com} R )^\top =(\det R) \textnormal{Id}$. 
We proceed by contradiction and assume that $t<T$. Then from \eqref{eq:RBM_semimart} we get that
\begin{equation*}
   R L_t =  Z_t - x-W_t-\mu t >0.
\end{equation*}
The last inequality comes from the fact that $Z_t\geqslant 0$ and that $x+W_t+\mu t <0$.
%$$
%(\det R ) L_t = (\text{com} R )^\top (Z_t - x+W_t+\mu t).
%$$
%\textcolor{red}{From assumptions~\ref{as:quasi_comp-S-1}
%, \ref{as:quasi_comp-S-2} and \ref{as:M-matrix} (?) we have that 
%$\det R \leqslant 0 $ (?) and that the coefficients of $\text{com} R$ are strictly positive (?).} 
Remembering that $L_t\geqslant 0$, the fact that $ R L_t>0$ implies that $R$ is an $\mathcal{S}$-matrix, which contradicts Assumption~\ref{as:quasi_comp-S-1}.
%$(\det R ) L_t \in \mathbb{R}_-^d$ and that $(\text{com} R )^\top (Z_t - x+W_t+\mu t) \in {\mathbb{R}_+^*}^d$.
We conclude that $T\leqslant t<\infty$. We have thus shown that $\{ \tau_x <\infty \} \subset \{ T<\infty \}$.
\item\label{it:step2} By Blumenthal's zero--one law, we have
\begin{equation*}
    \mathbb{P}[ \tau_0=0 ] =1, 
\end{equation*}
since
\begin{equation*}
   \{\tau_0 =0\}=\bigcap_{t>0} \left\{ \inf_{s \leqslant t}  (W_s+\mu s) <0 \right\}  \in \mathcal{F}_{0+}= \bigcap_{t>0} \mathcal{F}_t,
\end{equation*}
where $\mathcal{F}_t =\sigma \{ W_s , s\leqslant t \}$.
This implies that $\mathbb{P}[\tau_0 <\infty ]=1$. We deduce that almost surely, there exists $t_0$ such that $W_{t_0}+\mu t_0<0$, and then for all $x<-W_{t_0}-\mu t_0$ we have $\tau_x <\infty$. Then 
$\mathds{1}_{\{\tau_x < \infty\}}\underset{x\to 0}{\longrightarrow} 1 \text{ a.s.}$, and by dominated convergence we have 
\begin{equation*}
   \mathbb{P}[ \tau_x <\infty ]=\mathbb{E}[\mathds{1}_{\{\tau_x < \infty\}}] \underset{x\to 0}{\longrightarrow} 1.
\end{equation*} 
\end{enumerate}
Thanks to \ref{it:step1} and \ref{it:step2}, we conclude that $\mathbb{P}_x[T<\infty]\geqslant \mathbb{P}[ \tau_x <\infty ]$ and therefore $\underset{x\to 0}{\lim}\mathbb{P}_x [T<\infty] = 1$, using the above estimate.
%Utilisation de l'hypothèse de $\mathcal{S}$-matrix ?
\end{proof}
\begin{proof}[Proof of Proposition~\ref{prop:absorption_infinity}]
Introduce the event
\begin{equation*}
   B_x = \{\forall t\in\mathbb{R},\, x+W_t+\mu t >0\}.
\end{equation*}
For any element belonging to $B_x$, then %$L_t=0$ and
$Z_t=x+W_t+\mu t$ for all $t\in \mathbb{R}$ (the process never touches the boundary of the orthant, meaning no reflection on the boundary). We deduce that $Z_t>0$ for all $t\in \mathbb{R}$ and then that $B_x \subset \{ T=\infty \}$. Therefore,
\begin{equation*}
   \mathbb P[B_x]\leqslant \mathbb{P}_x [T=\infty].
\end{equation*}
To conclude, we are going to show that $\lim_{x\to \infty} \mathbb{P}[B_x]=1$. It comes from the fact that a.s.\ $\inf_{t>0} \{ W_t +\mu t \}> -\infty$, since $\mu>0$ by Assumption~\ref{as:drift>0}. For all $x> -\inf_{t>0} \{ W_t +\mu t \}$, we have $x+W_t+\mu t >0$ for all $t$. We deduce that
$
\mathds{1}_{\{\forall t\in\mathbb{R}, x+W_t+\mu t >0\}}\underset{x\to \infty}{\longrightarrow} 1 \text{ a.s.}
$,
and by dominated convergence, we have 
\begin{equation*}
   \mathbb{P}[ B_x ]=\mathbb{E}[\mathds{1}_{\{\forall t\in\mathbb{R}, x+W_t+\mu t >0\}}] \underset{x\to \infty}{\longrightarrow} 1.\qedhere
\end{equation*}
%\begin{enumerate}
%\item[\textit{Step 1.}] Assume that we are in $B_x$, then $L_t=0$ and $Z_t=x+W_t+\mu t$ for all $t$. As we assumed that the drift is positive, that is that all coordinates of $\mu$ are positive, we deduce that $Z_t=x+W_t+\mu t \to \infty$
%\item[\textit{Step 2.}] 
%\end{enumerate}
\end{proof}

Before proving Proposition~\ref{prop:dichotomy}, we first recall some useful definitions and properties related to recurrence and transience of Markov processes. All of them are most classical, but having them here stated clearly will facilitate our argument.
These results and their proofs may be found in~\cite{Az-66}.

Consider a continuous, strong Feller Markov process $X_t$ on a locally
compact state space $E$ with countable basis. For $V\subset E$, let us define $\tau_V = \inf \{t > 0 : X_t \in V \}$. 
\begin{itemize}
   \item The point $x\in E$ is said to be recurrent if for all neigbourhoods $V$ of $x$,
\begin{equation*}
   \mathbb{P}[\limsup \mathds{1}_V (X_t)=1]=1.
\end{equation*}
   \item If a point is not recurrent, it is said to be transient. In this case, by \cite[Thm.~III~1]{Az-66}, there exists a neigbourhood $V$ of $x$ such that $\mathbb{P}[\limsup \mathds{1}_V (X_t)=1]=0$.
\item The point $x$ is said to lead to $y$ if for all neighbourhoods $V$ of $y$, we have $\mathbb{P}_x [\tau_V < \infty] > 0$.
The points $x$ and $y$ are said to communicate if $x$ leads to $y$ and $y$ leads to $x$. This defines an equivalence relation. 
   \item If two states communicate, they are both transient or both recurrent \cite[Prop.~IV~2]{Az-66}.
   \item If all points are transient, then $X_t$ tends to $\infty$ as $t\to \infty$ almost surely \cite[Prop.~III~1]{Az-66}.
\end{itemize}
%If it exists a neighbourhood $V$ of $0$ such that $\mathbb{P}_x(\tau_V <\infty)<1$ then the process is transient and $\widetilde{Z}_t$ goes to infinity almost surely.

\begin{proof}[Proof of Proposition~\ref{prop:dichotomy}]
Define the process $(\widetilde{Z}_t)_{t\geq 0}$ as the process $(Z_t)_{t\geq 0}$ conditioned never to hit $0$ in a finite time. The transition semigroup of this new Markov process $\widetilde{Z}_t$ is defined, for $x\in \mathbb{R}_+^d \setminus \{ 0\}$ and $V \subset \mathbb{R}_+^d \setminus \{ 0\}$, by 
\begin{equation*}
   \mathbb{P}_x[\widetilde{Z}_t \in V]=\mathbb{P}_x[{Z}_t \in V \vert T=\infty]. 
\end{equation*}
All points of $\mathbb{R}_+^d \setminus \{0\}$ communicate, they thus constitute a unique equivalence class. We deduce that they all are transient or all recurrent. It is thus enough to show that one of them is transient, to show that they all are. 

Let us take a point in the interior of $\mathbb{R}_+^d \setminus \{0\}$, for example $x=(1,\ldots, 1)$. Since $\mu >0$, by standard properties of Brownian motion we have
\begin{equation*}
   \mathbb{P}[\forall t \in\mathbb{R},\,x+W_t+\mu t>0]>0.
\end{equation*}
In dimension one, this property directly derives from \cite[Eq.~1.2.4(1)]{BoSa-12} (on p.~252); it easily generalizes to all dimensions.
When this event of positive probability occurs, the process never touches the boundary and thus $\widetilde{Z}_t= x+W_t+\mu t \to \infty$ and $\limsup \mathds{1}_V (\widetilde{Z}_t)=0$, for any $V$ relatively compact neighbourhood of $x$. We have shown that there exists a neighbourhood $V$ of $x$ such that $\mathbb{P}_x [\limsup \mathds{1}_V (\widetilde{Z}_t)=1]<1$, which implies that $x$ is not recurrent and is then transient.

Using \cite[Prop.~III~1]{Az-66} allows us to conclude, since as recalled above, if all points are transient, then the process tends to infinity almost surely.
\end{proof}

\section{Partial differential equation for the absorption probability}
\label{sec:PDE}

In a classical way, the generator of the Brownian motion in the interior of the orthant is defined by 
\begin{equation*}
   \mathcal{G}f (x)= \lim_{t\to 0} \frac{\mathbb{E}_x[f(Z_t)] - f(x) }{t}= \frac{1}{2}(\nabla \cdot \Sigma \nabla f)  (x) + (\mu \cdot \nabla f)  (x),
\end{equation*}
where we assume that $f$ is bounded in the first equality and that $f$ is twice differentiable in the second equality. In the rest of the paper, the following assumption is made.

\begin{assumption}
\label{as:neumann}
For all continuous, bounded functions $g$, the transition semigroup
\begin{equation*}
   x\mapsto P_t g(x) := \mathbb{E}_x [g(Z_{t\wedge T})]
\end{equation*}
is differentiable, and satisfies the Neumann boundary condition $R_i \cdot P_t g(x)=0$ on the $i$th face of the orthant $x_i=0$.
\end{assumption}
\begin{remark}[Plausibility of Assumption~\ref{as:neumann}]
\label{rem:as4}
Many evidences suggest that this hypothesis is true:
\begin{itemize}
   \item By \cite[Cor.~3.3]{An-09}, Assumption~\ref{as:neumann} is true provided we replace $T$ by the first hitting time of the intersection of two faces, or assuming that the process does not hit the intersection of two faces. 
   \item As a consequence of the above, Assumption~\ref{as:neumann} is true in dimension two. 
   \item By \cite{DeZa-05}, Assumption~\ref{as:neumann} holds true in the particular case of orthogonal reflections.
   \item Assumption~\ref{as:neumann} is stated as a conjecture in \cite[(8.2b)]{HaRe-81a}; however, the latter article does not attempt to prove rigorously these regularity questions.
   \item The paper \cite{LiRa-19} shows in full generality the pathwise differentiability with respect to the starting point $x$. We believe that a way to attack the proof of Assumption~\ref{as:neumann} could be to combine the results of \cite{LiRa-19} with the computations made in the proof of \cite[Cor.~3.3]{An-09}.
   %Nonetheless, we did not check every details since it would be a non trivial result by itself which goes beyond the scope of this paper.
\end{itemize}
\end{remark}

\begin{proposition}[Partial differential equation]
\label{prop:PDE}
Under Assumptions~\ref{as:quasi_comp-S-1}, \ref{as:quasi_comp-S-2}, \ref{as:drift>0} and \ref{as:neumann},
the absorption probability \eqref{eq:absorption_probability} is the unique function $f$ which is
\begin{itemize}
   \item bounded and continuous in the interior of the orthant $\mathbb R_+^d$ and on its boundary,
   \item continuously differentiable in the interior of the orthant and on its boundary (except perhaps at the corner),
\end{itemize}
and which further satisfies the PDE:
\begin{itemize}
   \item $\mathcal{G} f =0$ on the orthant (harmonicity),
   \item $R_i \cdot \nabla f = 0$ on the $i$th face of the orthant $x_i=0$ (Neumann boundary condition),
   \item $f(0)=1$ and $\lim_{x\to \infty} f(x)=0$ (limit values).
\end{itemize}
\end{proposition}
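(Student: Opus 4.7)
The plan is to verify each property for $f(x)=\mathbb{P}_x[T<\infty]$ and then prove uniqueness, the two main tools being the semigroup invariance $f=P_t f$ arising from the strong Markov property and Itô's formula applied pathwise. Continuity of $f$ on $\overline{\mathbb{R}_+^d}$, together with the boundary values $f(0)=1$ and $\lim_{x\to\infty}f(x)=0$, is to be read off from Propositions~\ref{prop:absorption_origin} and~\ref{prop:absorption_infinity}, complemented by a coupling argument for continuity at interior points.

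For existence, the pivotal step is the identity
\begin{equation*}
   f(x)=\mathbb{E}_x[f(Z_{t\wedge T})]=P_t f(x),\quad t\geq 0,
\end{equation*}
obtained by splitting on $\{T\leq t\}$ (where $Z_{t\wedge T}=0$ and $f(0)=1$) versus $\{T>t\}$ (where the Markov property at $t$ gives $\mathbb{P}_x[T<\infty\mid\mathcal{F}_t]=f(Z_t)$). Assumption~\ref{as:neumann} then transfers to $f$ both differentiability and the Neumann condition $R_i\cdot\nabla f=0$ on the face $\{x_i=0\}$. Harmonicity $\mathcal{G}f=0$ follows either by differentiating $P_t f=f$ in $t$ at $t=0^+$, or by a Dynkin-type argument stopping $Z$ upon exit from a small ball contained in the interior of the orthant, where $Z$ is merely Brownian motion with drift and the standard generator computation applies.

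For uniqueness, let $g$ be any admissible solution. Itô's formula on $[0,t\wedge T]$ yields
\begin{equation*}
   g(Z_{t\wedge T})=g(x)+\int_0^{t\wedge T}\nabla g(Z_s)\cdot dW_s+\int_0^{t\wedge T}\mathcal{G}g(Z_s)\,ds+\sum_i\int_0^{t\wedge T}R_i\cdot\nabla g(Z_s)\,dL_s^i.
\end{equation*}
Harmonicity kills the drift integral, and each local-time integral vanishes because $L^i$ increases only on the face $\{x_i=0\}$, where $R_i\cdot\nabla g=0$. Boundedness of $g$ then makes $g(Z_{t\wedge T})$ a bounded martingale, hence $\mathbb{E}_x[g(Z_{t\wedge T})]=g(x)$. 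Sending $t\to\infty$ and using Proposition~\ref{prop:dichotomy}, which forces $|Z_t|\to\infty$ almost surely on $\{T=\infty\}$, together with the prescribed boundary values of $g$, dominated convergence yields $g(x)=\mathbb{P}_x[T<\infty]=f(x)$.

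The main obstacle I anticipate is the regularity gap: Assumption~\ref{as:neumann} as formulated provides only $C^1$ regularity for $f$, whereas Itô's formula for uniqueness and the second-order expression $\mathcal{G}f=\tfrac12\nabla\cdot\Sigma\nabla f+\mu\cdot\nabla f$ both require $C^2$ in the interior. The standard remedy is interior elliptic regularity for the uniformly elliptic operator $\mathcal{G}$: combined with $P_t f=f$ (or a mollification argument for $g$), this upgrades the solution to $C^\infty$ away from the corner, while the Neumann condition from Assumption~\ref{as:neumann} provides the requisite control up to the faces. A second, milder technical point is ensuring that the stochastic integral in the Itô decomposition is a true (not merely local) martingale, which follows from the boundedness of $\nabla g$ on compact subsets combined with a localization as $t\wedge T$ is approached.
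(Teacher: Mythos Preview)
Your proposal is correct and follows essentially the same approach as the paper: the semigroup identity $f=P_tf$ via the Markov property yields harmonicity and (through Assumption~\ref{as:neumann}) the Neumann condition, while Dynkin/It\^o's formula combined with Proposition~\ref{prop:dichotomy} and the prescribed limit values gives uniqueness. The only notable difference is in how the regularity obstacle is handled: you invoke interior elliptic regularity to upgrade $C^1$ to $C^2$, whereas the paper localizes away from the corner by replacing $T$ with $T_n=\inf\{t>0:|Z_t|<\tfrac{1}{n}\}$ before applying Dynkin's formula and then passing to the limit $n\to\infty$; this sidesteps the need for $C^2$ at the origin but, like your approach, still implicitly relies on interior smoothness for the formula to make sense.
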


\begin{proof}
The proof is similar to \cite[Prop.~11]{ErFrHu-20}. We start with the \textit{sufficient condition}. Dynkin's formula leads to
\begin{equation*}
\mathbb{E}_x[f(Z_{t\wedge T})] = f(x) +\mathbb{E}_x \int_0^{t\wedge T} \mathcal{G}f (Z_s) \mathrm{d}s + 
\sum_{i=1}^d \mathbb{E}_x \int_0^{t\wedge T} 
(R \nabla f )_i
\mathrm{d}L^i_s.
\end{equation*}
There is a technical subtlety in applying Dynkin's formula, as the latter requires functions having a $\mathcal C^2$-regularity, which is a priori not satisfied at the origin in our setting. However, we may first apply this formula for $T_n=\inf\{t>0 : \vert Z_t\vert <\frac{1}{n}\}<T$, then $f$ has the desired regularity on $\mathbb R_+^d\setminus \{x:\vert x\vert<\frac{1}{n}\}$. One may conclude as $T_n$ converges increasingly to $T$ as $n\to\infty$.

Since $f$ is assumed to satisfy the PDE stated in Proposition~\ref{prop:PDE}, the latter sum of three terms is simply equal to $f(x)$. We further compute
\begin{equation*}
   f(x) = \mathbb{E}_x[f(Z_{t\wedge T})] = 
\mathbb{E}_x[f(Z_{t\wedge T})\mathds{1}_{T\leqslant t}]
+
\mathbb{E}_x[f(Z_{t\wedge T})\mathds{1}_{T> t}]
=f(Z_T) \mathbb P_x[T\leqslant t]+
\mathbb{E}_x[f(Z_{t\wedge T})\mathds{1}_{T> t}].
\end{equation*}
As $t\to\infty$, the above quantity converges to
\begin{equation*}
   f(0)\mathbb P_x[T<\infty] + 
\mathbb{E}_x\left[\lim_{t\to\infty}f(Z_{t})\mathds{1}_{T=\infty}\right]
= \mathbb P_x[T<\infty],
\end{equation*}
where the last equality comes from the limit values $\lim_{x\to \infty} f(x)=0$, $f(0)=1$ and from Proposition~\ref{prop:dichotomy}, which together imply that when $T=\infty$ we have $\lim_{t\to\infty}Z_{t}=\infty$.
We immediately deduce that
$f(x)=\mathbb P_x[T<\infty]$.

We now move to the \textit{necessary condition}. We denote the absorption probability~\eqref{eq:absorption_probability} by $f$ and show that it satisfies the PDE of Proposition~\ref{prop:PDE}. Consider the event $\{ T<\infty \} \in \mathcal{F}_\infty$ and define 
\begin{equation*}
   M_t=\mathbb{E} [\mathds{1}_{\{ T<\infty \}} \vert \mathcal{F}_{t \wedge T}],
\end{equation*}
which is a $\mathcal{F}_t$-martingale. Observe that $M_0=f(x)$ and, by the Markov property, $M_t=f(Z_{t \wedge T})$. We deduce that $\mathbb{E}_x[f(Z_{t \wedge T})]=\mathbb{E}[M_t \vert \mathcal{F}_0]=M_0=f(x)$.
By definition of $\mathcal{G}$, we obtain that for $x$ in the interior of the orthant,
\begin{equation*}
   \mathcal{G}f (x)= \lim_{t\to 0} \frac{\mathbb{E}_x[f(Z_t)] - f(x) }{t} = 0.
\end{equation*}
The Neumann boundary condition and the differentiability follow from the fact that $f(x)=\mathbb{E}_x[f(Z_{t \wedge T})]$ and from Assumption~\ref{as:neumann}.
%\cite[Cor.~3.3]{An-09}. 
The limit values follow from our Propositions~\ref{prop:absorption_origin} and \ref{prop:absorption_infinity}.
\end{proof}

\begin{remark}[Duality between absorption probability and stationary distribution]
\label{rem:dual}
Let us define the dual generator $
\mathcal{G}^* f (x)= \frac{1}{2}(\nabla \cdot \Sigma \nabla f)  (x) - (\mu \cdot \nabla f)  (x)
$ 
as well as the matrix $R^* = 2 \Sigma -R \ \textnormal{diag} (\Sigma )$, whose columns are denoted by $R^*_i$. In the recurrent case, the stationary distribution satisfies the following PDE, see \cite[Eq.~(8.5)]{HaRe-81a}:
\begin{itemize}
\item $\mathcal{G}^* f =0$ in the orthant, 
\item $R^*_i \cdot \nabla f -2 \mu_i f= 0$ on the $i$th face of the orthant defined by $x_i=0$. %(Neumann boundary condition)
%\item $f(0_d)=1$ and $\lim_\infty f=0$ (Limit values)
\end{itemize}
As a consequence, the absorption probability satisfies a PDE (Proposition~\ref{prop:PDE}), which is dual to the one which holds for the stationary distribution.
\end{remark}

\section{Dual skew symmetry: proof of the main result}
\label{sec:skew}

This section is devoted to the proof of Theorem~\ref{thm:main}, which establishes the dual skew symmetry condition. We first prove two technical lemmas on the reflection matrix $R$ in \eqref{eq:def_reflection_matrix}.
\begin{lemma}
\label{lemme}
If $R$ satisfies Assumptions~\ref{as:quasi_comp-S-1} and \ref{as:quasi_comp-S-2}, then for all $i$, there exists $j\neq i$ such that $r_{ij} \neq 0$.
\end{lemma}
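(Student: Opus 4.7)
The plan is to argue by contradiction: suppose there is some index $i$ such that $r_{ij}=0$ for every $j\neq i$. Since $R$ has ones on the diagonal, this means that the $i$-th row of $R$ coincides with $e_i^\top$, the transpose of the $i$-th canonical basis vector. The goal is then to exploit Assumption~\ref{as:quasi_comp-S-2} to construct a non-negative vector $\tilde y\in\mathbb R^d$ with $R\tilde y>0$, which would contradict Assumption~\ref{as:quasi_comp-S-1}.

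To produce such a vector, I would first consider the principal sub-matrix $R'$ of $R$ obtained by deleting the $i$-th row and the $i$-th column. By Assumption~\ref{as:quasi_comp-S-2}, $R'$ is completely-$\mathcal S$, and in particular $\mathcal S$: there exists $y\in\mathbb R^{d-1}$ with $y\geqslant 0$ and $R'y>0$. I would then lift $y$ to a vector $\tilde y\in\mathbb R^d$ by inserting some small $\varepsilon>0$ in the $i$-th coordinate, keeping the other coordinates equal to those of $y$. Clearly $\tilde y\geqslant 0$.

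It remains to verify that $R\tilde y>0$ for $\varepsilon$ small enough. For the $i$-th coordinate, the hypothesis on the $i$-th row gives
\begin{equation*}
   (R\tilde y)_i = \sum_{j} r_{ij}\tilde y_j = \tilde y_i = \varepsilon>0.
\end{equation*}
For any other coordinate $k\neq i$, writing out the sum and separating the $j=i$ term,
\begin{equation*}
   (R\tilde y)_k = \sum_{j\neq i} r_{kj}y_j + r_{ki}\varepsilon = (R'y)_k + r_{ki}\varepsilon.
\end{equation*}
Since $(R'y)_k>0$ for every $k\neq i$, choosing $\varepsilon>0$ small enough ensures $(R\tilde y)_k>0$ for all such $k$. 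Thus $R\tilde y>0$ with $\tilde y\geqslant 0$, so $R$ is an $\mathcal S$-matrix, contradicting Assumption~\ref{as:quasi_comp-S-1}.

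I do not expect any real obstacle here: the argument is a short combinatorial manipulation that uses nothing but the definition of $\mathcal S$-matrix, the strict positivity of $R'y$ (which allows an $\varepsilon$-perturbation without losing positivity), and the fact that the deleted row/column is precisely the one whose row in $R$ has the problematic form $e_i^\top$. The mild technical point is the perturbation step; once it is set up correctly, the contradiction is immediate.
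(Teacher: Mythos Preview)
Your proof is correct and follows essentially the same route as the paper's: both argue by contradiction, delete the $i$-th row and column, invoke Assumption~\ref{as:quasi_comp-S-2} to get a non-negative vector with strictly positive image under the sub-matrix, and then extend it to a full vector witnessing that $R$ is $\mathcal S$. The only cosmetic difference is that the paper fixes the $i$-th coordinate at $1$ and scales the remaining ones by a large $\lambda$, whereas you keep the remaining coordinates fixed and insert a small $\varepsilon$ in position $i$; these are the same argument up to a global rescaling.
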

\begin{proof}
It is enough to prove Lemma~\ref{lemme} for $i=1$, as we would show the other cases similarly.
Consider $\widetilde R$ the principal submatrix of $R$ obtained by removing the first line and the first column. This matrix is completely $\mathcal{S}$ by Assumption~\ref{as:quasi_comp-S-2}, so that there exists $\widetilde X=(x_2,\ldots,x_d)^\top\geqslant 0$
 such that $\widetilde R \widetilde X >0$. Consider now $\widetilde C_1=(r_{21},\ldots,r_{d1})^\top$, which is the first column of $R$ without its first coordinate. Let us choose $\lambda>0$ large enough such that $ \widetilde C_1 + \lambda \widetilde R \widetilde X >0$. If for all $j\neq 1$ we have $r_{1j}=0$, then for 
  $
  X=(1,\lambda x_2,\ldots,\lambda  x_d)^\top$
  we would have 
\begin{equation*}
    RX
 =
 \begin{pmatrix} 1 & 0  \cdots  0 \\ 
  \widetilde C_1 &  \widetilde R  \\
 \end{pmatrix}
  \begin{pmatrix}
   1  \\
   x_2 \\
    \vdots  \\
    x_d
  \end{pmatrix}  
  =\begin{pmatrix} 1 \\ \widetilde C_1+\lambda \widetilde R \widetilde X  \end{pmatrix}>0
\end{equation*} 
and then $R$ would be an $\mathcal{S}$-matrix, contradicting our Assumption~\ref{as:quasi_comp-S-1}. 
\end{proof}

\begin{lemma}
\label{lemme2}
If $R$ satisfies Assumptions~\ref{as:quasi_comp-S-1} and \ref{as:quasi_comp-S-2}, and if in addition $\det R=0$, then $R$ has rank $d-1$, and there exist a positive column vector $U>0$ in $\ker R$ and a positive row vector $a>0$ such that $aR=0$.
\end{lemma}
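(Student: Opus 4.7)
My plan is to establish the three conclusions in a specific order, in order to avoid logical circularity: first a non-zero $a \geqslant 0$ with $a^{\top} R \leqslant 0$, then upgraded to $a > 0$; next a positive right kernel vector $U > 0$; then the equality $aR = 0$; and finally the rank assertion. For the first step, Assumption~\ref{as:quasi_comp-S-1} is equivalent to the statement that the convex polyhedral cone $\{Rx : x \geqslant 0\}$ does not meet the open orthant $\mathbb{R}^d_{>0}$, and a standard separation argument (Farkas' lemma) yields $a \geqslant 0$, $a \neq 0$, with $a^{\top} R \leqslant 0$ componentwise. I would then upgrade $a$ to strict positivity via Assumption~\ref{as:quasi_comp-S-2}, in the very style of the proof of Lemma~\ref{lemme}: if $a_k = 0$ for some $k$, set $I = \{1,\ldots,d\}\setminus\{k\}$ and pick $y_I \geqslant 0$ with $R_{II} y_I > 0$ (which exists since $R_{II}$ is completely-$\mathcal{S}$). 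Extending $y_I$ by $0$ in position $k$ and evaluating $a^{\top} R y$ in two ways---once bounded above by $0$ from $a^{\top} R \leqslant 0$ and $y \geqslant 0$, and once expanded as $\sum_{i \in I} a_i (R_{II} y_I)_i \geqslant 0$---forces $a_I = 0$, hence $a = 0$, a contradiction.

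To construct $U > 0$ in $\ker R$ I would use $\det R = 0$: pick any non-zero $V \in \ker R$ and rule out two unwanted sign patterns. If $V$ has mixed signs, decompose $V = V^+ - V^-$ with disjoint non-empty supports $I^\pm$; the identity $RV^+ = RV^-$, paired with the positive left vector $a$ and Assumption~\ref{as:quasi_comp-S-2} applied to the sub-matrices $R_{I^+ I^+}$ and $R_{I^- I^-}$, yields a contradiction via a sign analysis on $a \cdot RV^\pm$. If instead $V \geqslant 0$ has a zero coordinate, restrict to $I := \operatorname{supp}(V) \subsetneq \{1,\ldots,d\}$: the identity $R_{II} V_I = 0$ with $V_I > 0$, combined with the $\mathcal S$-property of $R_{II}$ and the restriction of $a$ to $I$, yields a two-way computation forcing a contradiction. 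Hence (up to sign) $V > 0$, and we set $U := V$. The equality $a^{\top} R = 0$ then follows immediately from $0 = a^{\top} R U = -c \cdot U$ with $c := -a^{\top} R \geqslant 0$ and $U > 0$, which forces $c = 0$.

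Finally, for $\operatorname{rank}(R) = d-1$, suppose $\dim \ker R \geqslant 2$ and pick $U' \in \ker R$ not collinear with $U$. Choosing the smallest $|t| > 0$ such that $V := U + t U'$ remains non-negative but acquires a zero coordinate produces a non-zero non-negative kernel vector with a zero coordinate, contradicting the positivity established above. The main obstacle is the sign analysis required to rule out a non-negative kernel vector with a zero coordinate in the construction of $U$: this requires a delicate use of both the row and column null-space structure of the restricted matrix $R_{II}$ together with the positive left vector $a$. The careful ordering of the steps---first $a \geqslant 0$ from separation, then $a > 0$ via Assumption~\ref{as:quasi_comp-S-2}, then $U > 0$ via $\det R = 0$ and the sign analysis, and only afterwards the upgrade $a^{\top} R \leqslant 0 \Rightarrow a^{\top} R = 0$---is essential to avoid the apparent logical circularity between the constructions of $a$ and $U$.
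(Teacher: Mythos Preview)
Your approach differs substantially from the paper's. The paper constructs, for each $j$, a non-negative vector $X_j$ from the $\mathcal{S}$-property of the $(d-1)\times(d-1)$ principal submatrix $\widetilde R_j$, normalizes so that $(RX_j)_j=-1$ while the other coordinates of $RX_j$ are positive, assembles these into a matrix $P\geq 0$ with $-RP=2\,\textnormal{Id}-T$ for an entrywise strictly positive matrix $T$, and then applies the Perron--Frobenius theorem to $T$ to extract simultaneously the rank conclusion and the strictly positive right and left null vectors. Your Farkas/separation route is conceptually appealing, and Steps~1, 2, 4 and 5 are correct as written: separating the closed cone $\{Rx:x\geq 0\}$ from the open orthant gives $a\geq 0$, $a\neq 0$ with $a^\top R\leq 0$; the upgrade to $a>0$ is precisely the device of Lemma~\ref{lemme}; and once $U>0$ with $RU=0$ is available, both $a^\top R=0$ and the rank assertion follow as you describe.

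The gap is Step~3. In your Case~2 you claim that $R_{II}V_I=0$ with $V_I>0$, the $\mathcal{S}$-property of $R_{II}$, and the restriction $a_I$ give a two-way contradiction. The natural candidate is to evaluate $a_I\cdot(R_{II}x_I)$ for some $x_I\geq 0$ with $R_{II}x_I>0$: one side is strictly positive, but the other would require $a_I^\top R_{II}\leq 0$, and this does \emph{not} follow from $a^\top R\leq 0$. Indeed, using $a\cdot RV=0$ one obtains $(a^\top R)_j=0$ for $j\in I$, whence $(a_I^\top R_{II})_j=-\sum_{i\notin I}a_i r_{ij}$, and there is no sign information on the cross-block entries $r_{ij}$ for $i\notin I$, $j\in I$; the remaining constraint $(RV)_i=0$ for $i\notin I$ only says that the row $(r_{ij})_{j\in I}$ is orthogonal to $V_I$, which tells you nothing about its pairing with $x_I$. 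The same cross-block obstruction blocks your Case~1: from $a\cdot RV^+=a\cdot RV^-\leq 0$ and the $\mathcal{S}$-property of $R_{I^\pm I^\pm}$ one again runs into uncontrolled terms $(Ry^\pm)_i$ for $i$ outside $I^\pm$. What you implicitly need is the standalone fact that a completely-$\mathcal{S}$ matrix with unit diagonal admits no strictly positive right null vector; this is true, but it is not a one-line consequence of the ingredients you list, and in the paper it is precisely the Perron--Frobenius machinery that supplies this positivity.
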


\begin{proof}
The rank of the matrix $R$ is obviously $\leqslant d-1$, since $\det R=0$. We now show that the rank is $\geqslant d-1$. 

Let $\widetilde R_j$ be the submatrix of $R$ obtained by removing the $j$th line and the $j$th column. These matrices are $\mathcal{S}$-matrices by Assumption~\ref{as:quasi_comp-S-2}, and we can choose 
\begin{equation*}
   \widetilde X_j=\begin{pmatrix}
  \widetilde x_{1j}  \\
    \vdots  \\
 \widetilde   x_{(j-1)j}  \\
 \widetilde   x_{(j+1)j}  \\
    \vdots  \\
  \widetilde  x_{dj}
  \end{pmatrix} \geqslant 0
  \quad \text{such that}
  \quad 
  \widetilde R_j \widetilde X_j
=\widetilde Y_j=\begin{pmatrix}
   \widetilde y_{1j}  \\
    \vdots  \\
    \widetilde y_{(j-1)j}  \\
    \widetilde y_{(j+1)j}  \\
    \vdots  \\
    \widetilde y_{dj}
  \end{pmatrix}  
   >0
  .
\end{equation*} 
We now define the vertical vectors $X_j^\varepsilon=(\widetilde x_{ij})_{i=1,\ldots, d}$, setting $\widetilde x_{jj}=\varepsilon$ for some $\varepsilon>0$. We have 
\begin{equation*}
   R X_j^\varepsilon=Y_j^\varepsilon
 =(y_{ij}^\varepsilon)_{i=1,\ldots, d},
\end{equation*}  
where $y_{ij}^\varepsilon= \varepsilon r_{ij}+ \widetilde y_{ij}>0$ for $i \neq j$ and $\varepsilon>0$ small enough, and $y_{jj}^\varepsilon=\varepsilon + \widetilde L_j \widetilde X_j$, where we set 
\begin{equation*}
   \widetilde L_j=(r_{1j}, \ldots, r_{(j-1)j},r_{(j+1)j},\ldots ,r_{dj})
\end{equation*}
the $j$th line of $R$, with the $j$th coordinate $r_{jj}=1$ excluded. Since $R$ is not an $\mathcal{S}$-matrix by our Assumption~\ref{as:quasi_comp-S-1}, we must have $y_{jj}^\varepsilon=\varepsilon+ \widetilde L_j \widetilde X_j \leqslant 0$. We deduce that $y_{jj}^0= \widetilde L_j \widetilde X_j \leqslant -\varepsilon <0$.

Then, introducing the vectors 
\begin{equation*}
   X_j=\frac{1}{-y_{jj}^0 } X_j^0 \geqslant 0
\quad \text{and}
  \quad
  Y_j=(y_{ij})_{i=1,\ldots ,d}  =R X_j,
\end{equation*}
we have 
\begin{equation*}
   Y_j=R X_j=\begin{pmatrix}
   y_{1j}  \\
    \vdots  \\
    y_{(j-1)j}  \\
   - 1 \\
    y_{(j+1)j}  \\
    \vdots  \\
    y_{dj}
  \end{pmatrix} ,
  \quad\text{where } y_{ij}=\frac{\widetilde y_{ij}}{-y_{jj}^0 } >0 \text{ for } i\neq j.
\end{equation*} 
Denoting the matrix $P=(X_1, \ldots, X_d ) \geqslant 0$, we have 
\begin{equation*}
   -RP= \begin{pmatrix}
    1 & -y_{12} & \dots & -y_{1d} \\
    -y_{21} & 1 & \dots & -y_{2d} \\
    \vdots & \vdots & \ddots & \vdots \\
   - y_{d1} & -y_{d2} & \dots & 1
  \end{pmatrix}= 2 \textnormal{Id} -T,
  \text{ where } T=\begin{pmatrix}
    1 & y_{12} & \dots & y_{1d} \\
    y_{21} & 1 & \dots & y_{2d} \\
    \vdots & \vdots & \ddots & \vdots \\
   y_{d1} & y_{d2} & \dots & 1
  \end{pmatrix}>0.
\end{equation*}
All coefficients of $T$ are positive. Consequently, using Perron-Frobenius theorem, $T$ has a unique maximal eigenvalue $r$, its associated eigenspace is one-dimensional and there exists a positive eigenvector $V$ associated to $r$. Let us remark that  since $\det R=0$, then $\det (2 \textnormal{Id} -T)=\det(-RP)=0$ and $2$ is an eigenvalue of $T$. Then $r\geqslant 2$, and there are two cases to treat.
\begin{itemize} 
   \item Assume first that the maximal eigenvalue is $r>2$. Let $V>0$ be a positive associated eigenvector such that $TV=rV$. We deduce that $-RPV=2V-TV=(2-r)V$ and then $R(PV)=(r-2)V>0$, where $PV \geqslant 0$ since $P \geqslant 0$ and $ V > 0$. Then we have shown that $R$ is an $\mathcal{S}$-matrix, which contradicts Assumption~\ref{as:quasi_comp-S-1}. So we must be in the situation where $r=2$.
   \item If $r=2$ is the maximal eigenvalue of $T$, and $V>0$ the positive eigenvector such that $TV=2V$, then we have $RU=0$ for $U=PV>0$. Furthermore $\dim \ker(2\textnormal{Id}-T)=1$ and then $d-1=\text{rank} (2\textnormal{Id}-T)=\text{rank} RP \leqslant \text{rank} R$ and then $R$ has rank $d-1$.
\end{itemize}
Left eigenspaces of $T$ are (right) eigenspaces of $T^\top$. If we take $a$ such that $aR=0$, then $a$ belongs to the left eigenspace associated to the eigenvalue $2$ of $T$. By Perron-Frobenius theorem, we deduce that we can choose $a>0$.
\end{proof}

We now prove a result showing that the hitting probability of the origin is never $0$, for all starting points.
\begin{lemma}
For all $x\in \mathbb{R}_+^d$, $f(x) > 0$.
\label{lemma:fpositive}
\end{lemma}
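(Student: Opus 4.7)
The plan is to show that from every $x \in \mathbb R_+^d$, the absorbed process enters, with positive probability before the absorption time $T$, a neighborhood of the origin on which $f$ is already bounded below by a positive constant.

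First, Proposition~\ref{prop:absorption_origin} provides some $\epsilon > 0$ such that $f(y) \geq \tfrac12$ for every $y \in \mathbb R_+^d$ with $|y| \leq \epsilon$. Setting $\tau = \inf\{t > 0 : |Z_t| \leq \epsilon\}$, the strong Markov property of the absorbed SRBM (Proposition~\ref{prop:existence_ASRBM}) applied at the stopping time $\tau \wedge T$ yields
\[
   f(x) \geq \mathbb E_x\bigl[f(Z_{\tau}) \mathbf 1_{\tau < T}\bigr] \geq \tfrac12 \, \mathbb P_x[\tau < T],
\]
so it suffices to prove $\mathbb P_x[\tau < T] > 0$ for every $x \geq 0$.

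Second, assume that $x$ lies in the interior of $\mathbb R_+^d$. I construct a continuous polygonal path $\gamma \colon [0, t_0] \to \mathrm{int}(\mathbb R_+^d)$ with $\gamma(0) = x$, $\gamma(t_0) = y_\star$ where $y_\star$ is an interior point satisfying $|y_\star| < \epsilon/2$, and such that $\gamma$ stays at a positive distance $\eta > 0$ from the boundary of the orthant. The support theorem for Brownian motion with drift $\mu$ and (non-degenerate) covariance $\Sigma$ then gives, for every $\delta > 0$,
\[
   \mathbb P\Bigl(\sup_{t \in [0, t_0]} \bigl|x + W_t + \mu t - \gamma(t)\bigr| < \delta\Bigr) > 0.
\]
Choosing $\delta < \min(\eta, \epsilon/2)$, on this event the trajectory $t \mapsto x + W_t + \mu t$ remains in $\mathrm{int}(\mathbb R_+^d)$ for every $t \in [0, t_0]$ and ends in $B(0, \epsilon)$. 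The local-time components $L^i$ in~\eqref{eq:RBM_semimart} then stay at zero throughout $[0, t_0]$, so that $Z_t = x + W_t + \mu t$ on this interval; in particular $T > t_0$ and $|Z_{t_0}| < \epsilon$, which forces $\tau \leq t_0 < T$ and hence $\mathbb P_x[\tau < T] > 0$.

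Third, if $x$ is on the boundary of the orthant, the strict positivity of the drift (Assumption~\ref{as:drift>0}) combined with the non-degenerate diffusive fluctuations of $W$ ensures that for some small $s > 0$ one has $\mathbb P_x[Z_s \in \mathrm{int}(\mathbb R_+^d),\, s < T] > 0$. Applying the Markov property at time $s$ reduces the situation to the interior starting point already treated.

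The main obstacle is the third step: rigorously justifying that the reflected semimartingale started on the boundary reaches the interior at a positive time with positive probability, given that $R$ is not assumed completely-$\mathcal S$ and local time may accumulate immediately at $t = 0$. One way to handle this is to argue coordinate by coordinate, using $\mu_i > 0$ and one-dimensional estimates for the reflected Brownian motion obtained by freezing the transverse coordinates, which show that each initially vanishing coordinate becomes strictly positive arbitrarily soon with positive probability.
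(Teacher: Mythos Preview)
Your approach is essentially the same as the paper's: both argue that the process reaches, with positive probability, a region where $f$ is already known to be bounded away from zero, and then use the martingale identity $f(x)=\mathbb E_x[f(Z_{t\wedge T})]$ (or its strong-Markov version). The paper is terser: it picks a single interior point $y_0$ with $f(y_0)>0$, passes to a neighborhood $U$ by continuity, and writes
\[
   f(x)=\mathbb E_x[f(Z_{t\wedge T})]\geq \int_U f(y)\,\mathbb P_x(Z_{t\wedge T}\in\dd y)>0,
\]
without separating the interior and boundary cases and without spelling out why the transition law charges $U$. Your support-theorem argument for interior $x$ is a clean way to justify exactly that last inequality.

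The soft spot you flagged in your third step is real but easy to close without the coordinate-by-coordinate freezing. Under Assumption~\ref{as:quasi_comp-S-2} the absorbed SRBM is a semimartingale on $[0,T)$, and a standard boundary property (see e.g.\ Reiman--Williams~\cite{ReWi-88} or the construction in~\cite{Ta-90,TaWi-93}) is that the Lebesgue measure of $\{t<T:Z_t\in\partial\mathbb R_+^d\}$ is a.s.\ zero. Since for $x\neq 0$ we have $T>0$ a.s.\ by path continuity, it follows that for almost every small $s>0$ one has $Z_s\in\mathrm{int}(\mathbb R_+^d)$ and $s<T$, which immediately gives $\mathbb P_x[Z_s\in\mathrm{int}(\mathbb R_+^d),\,s<T]>0$. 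Alternatively, you can simply quote the fact (used in the proof of Proposition~\ref{prop:dichotomy}) that all points of $\mathbb R_+^d\setminus\{0\}$ communicate, which directly yields $\mathbb P_x(Z_{t\wedge T}\in U)>0$ for any open $U$.
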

\begin{proof}
By Proposition~\ref{prop:absorption_origin}, there exists a point $y_0$ in the interior of the orthant such that $f(y_0)> 0$. By continuity of $f$ (Proposition~\ref{prop:PDE}), we can find an open neighbourhood $U$ of $y_0$ such that $f(y) > 0$ for all $y \in U$. Then we conclude that
\begin{equation*}
   f(x)=\mathbb{E}_x [f(Z_{t\wedge T})]= \int_{\mathbb{R}_+^d}  f(y) \mathbb{P}_x(X_{t\wedge T} = \mathrm{d}y) \geqslant \int_{U}  f(y) \mathbb{P}_x(X_{t\wedge T} = \mathrm{d}y) >0.
\end{equation*}
(The first equality in the previous equation has already been proved in the proof of Proposition~\ref{prop:PDE}). %We give also a second proof of this statement. The function $f$ is a positive harmonic ($\mathcal{G}f=0$), therefore its minimum can be only be reached on a boundary. Then since $f$ is not $0$ everywhere, $f(x)>0$ for all $x$ in the orthant.
\end{proof}

Let us now prove the main result.
\begin{proof}[Proof of Theorem~\ref{thm:main}]
\ref{thm:main_it1} $\Rightarrow$ \ref{thm:main_it2}:
We assume that $f(x)=f_1(x_1)\cdots f_d(x_d)$ and we denote $\partial \ln f_i = f'_i /f_i$ (note that due to Proposition~\ref{prop:PDE}, the functions $f$ and $f_i$ are differentiable and by Lemma~\ref{lemma:fpositive}, $f_i(x_i) \neq 0$ for all $i$ and all $x_i$). On the boundary $x_i=0$, the Neumann boundary condition of Proposition~\ref{prop:PDE} implies that
\begin{equation*}
   0=\frac{R_i \cdot \nabla f}{f}=R_i \cdot 
\begin{pmatrix}
    \partial \ln f_1 (x_1)  \\
    \vdots  \\
    \partial \ln f_d (x_d)
  \end{pmatrix}
  \text{ for } x_i=0.
\end{equation*}
In particular, for all $j \neq i$, taking $x_{i'}=0$ for all $i'\neq j$, we obtain
\begin{equation*}
   R_i \cdot 
\begin{pmatrix}
    \partial \ln f_1 (0)  \\
    \vdots  \\
    \partial \ln f_j (x_j)  \\
    \vdots  \\
    \partial \ln f_d (0)
  \end{pmatrix}
  =0.
\end{equation*}
We deduce that for all $i$ and $j$ such that $i \neq j$, the function 
$r_{ij} \partial \ln f_j (x_j) $ 
is a constant, which we can compute as $- \sum_{j'\neq j} r_{ij'} \partial \ln f_{j'} (0)$. By Lemma~\ref{lemme}, for all $j$ there exists $i \neq j$ such that $r_{ij}\neq 0$.
This implies that $\partial \ln f_j (x_j) $ is constant and then that $f_j$ is exponential: there exists $a_j$ such that $f_j(x_j)=e^{a_j x_j}$. The limit value $\lim_{x\to\infty} f(x) =0$ implies that $a \neq 0$.
%But $r_{ij}$ 
%that is 
%$$(I-R)_i \begin{pmatrix}
%    \partial \ln f_1 (x_1)  \\
%    \vdots  \\
%    \partial \ln f_d (x_d)
%  \end{pmatrix} = \partial \ln f_i (0) .$$
%We deduce that 
%$$
%(I-R)
%\begin{pmatrix}
%    \partial \ln f_1 (x_1)  \\
%    \vdots  \\
%    \partial \ln f_d (x_d)
%  \end{pmatrix}
%  = \begin{pmatrix}
%    \partial \ln f_1 (0)  \\
%    \vdots  \\
%    \partial \ln f_d (0)
%  \end{pmatrix}
%$$

\ref{thm:main_it2} $\Rightarrow$ \ref{thm:main_it1}: This implication is trivial by taking $f_i(x_i)=e^{a_i x_i}$.

\ref{thm:main_it2} $\Rightarrow$ \ref{thm:main_it3}: If $f(x)=e^{ax}$ satisfies the PDE of Proposition~\ref{prop:PDE}, then $R_i \cdot \nabla f(x)= a R_i  e^{ax}=0 $ on the boundary face $x_i=0$. We obtain that $ a R_i =0$ for all $i$ and then that $a R =0$. We deduce that $\det R=0$ since $a\neq 0$.

\ref{thm:main_it3} $\Rightarrow$ \ref{thm:main_it2}: If $\det R =0$, then by Lemma~\ref{lemme2} one has $\dim \ker R=1$, and we can choose $a' \in\mathbb{R}^d$ such that $a'>0$ and $ a' R =0$. Then $a=- \frac{a'\mu}{a'  \Sigma \cdot a'}\ a'<0$ is the unique vector which satisfies $a\cdot R=0$ and $a  \Sigma \cdot a +a \mu =0 $. Then it is easy to verify that $e^{ax}$ satisfies the PDE of Proposition~\ref{prop:PDE}, while the boundary condition at infinity comes from the fact that $a<0$.
\end{proof}

%\textcolor{red}{
%-Montrer l'unicité de $a$ ? Si $\det R=0$ et qu'on est sous les hypothèses 1 et 2 alors rang de $R$ egal $n-1$ ? (en fait l'unicité et ce résultat découle déjà de ce qu'on a montré dans le théorème précédent) Si Assumption~\ref{as:M-matrix} les coefficients hors diagonales sont négatifs, alors par définition $R$ est un Z-matrice, si on extrait une matrice de dimension n-1 c'est aussi une Z-matrice qui est aussi une S-matrice et cela implique d'être une M-matrice et donc d'être inversible (cf wiki) donc le rang de de $R$ est au moins n-1. Si on ne suppose pas les coeff hors diagonales négatifs ???
%\\
%-Faire une remarque sur Assumption~\ref{as:M-matrix} ou le supprimer.
%\\
%-Remarque sur temps atteinte des arretes 
%\\
%-La coplanarité est la condition limite de l'existence en tant que semimartingale pour tout t sans absorption. Montrer que la frontière entre l'ensemble des matrices qui ne sont pas S mais dont les sous matrices sont completely S d'une part et l'ensemble des matrices completement S est l'ensemble des matrices de determinant nul ?
%}

\section{A generalization of Theorem~\ref{thm:main}: absorption on a facet}
\label{sec:gen}

Theorem~\ref{thm:main} can be generalized to the case where the RBM is absorbed at a facet of the orthant, with equation 
\begin{equation*}
   x_{i_1}=\cdots=x_{i_k}=0,
\end{equation*}  
for some fixed $k\in \{1,\ldots, n \}$. The situation where $k=n$ is the case of an absorption at the apex of the cone, which is treated in detail in the present article. For the sake of brevity and to avoid too much technicality, we will not prove this generalization in this article, even though
%the proof would be similar at each step and 
all intermediate steps in the proof may be extended.

In the general case of a facet, let us state three assumptions which generalize Assumptions~\ref{as:quasi_comp-S-1},~\ref{as:quasi_comp-S-2} and~\ref{as:drift>0}. Let us define $\widetilde R$ (resp.\ $\widetilde \Sigma$) the principal sub-matrix of $R$ (resp.\ $\Sigma$), where we keep only the $i_1$th up to $i_k$th lines and columns.
\begin{itemize}
   \item The new Assumption~\ref{as:quasi_comp-S-1} is that the reflection matrix $\widetilde R$ is not $\mathcal{S}$. 
   \item The new second assumption is that all principal sub-matrices of $R$ which do not contain $\widetilde R$ are completely-$\mathcal{S}$. 
   \item The third assumption about the positivity of the drift $\mu>0$ remains unchanged (even though we could probably weaken this hypothesis).
\end{itemize}
%is $\widetilde \mu >0$ (we could probably weaken this hypothesis assuming only that $\widetilde \mu $ has at least one coordinate which is positive). 
Under these assumptions, we may define the reflected Brownian motion $(Z_t)_{t\geq 0}$ until time 
\begin{equation*}
   \widetilde T = \inf \{t>0 : Z_t^{i_1}=\cdots =Z_t^{i_k}=0 \},
\end{equation*} 
where $Z^i$ stands for the $i$th coordinate of $Z$. Let us denote the absorption probability
\begin{equation*}
   \widetilde f(x)=\mathbb P_x[\widetilde T<\infty].
\end{equation*} 
Then Theorem~\ref{thm:main} may be extended as follows. The following assertions are equivalent:
\begin{enumerate}[label={\rm(\roman{*}')},ref={\rm(\roman{*}')}]
   \item $f$ has a product form.
   \item $f$ is exponential, i.e., $f(x)= \exp (a_{i_1}x_{i_1}+ \cdots +a_{i_k}x_{i_k} )$ with $a_{i_j}\neq 0$.
   \item $\det \widetilde R =0$.
\end{enumerate} 
In this case, the vector $\widetilde a=(a_{i_1}, \ldots ,a_{i_k} )$ is negative and is the unique non-zero vector such that $\widetilde a \widetilde R =0$ and $\widetilde a  \widetilde\Sigma \cdot \widetilde a + \widetilde a \widetilde\mu=0$, where we defined the vertical vector $\widetilde \mu =({\mu_{i_j}})_{j=1,\ldots , k}$.

\subsection*{Acknowledgments}

We are grateful to Philip Ernst and to John Michael Harrison for very interesting discussions about topics related to this article. We thank an anonymous referee for very useful remarks and suggestions.

\small

\end{document}